\documentclass[11pt]{amsart}
\usepackage{fullpage}
\usepackage{amsfonts}
\usepackage{amsmath}
\usepackage{amssymb}
\usepackage{amsthm}
\newtheorem{theorem}{Theorem}

\newtheorem{cor}[theorem]{Corollary}

\newtheorem{lemma}[theorem]{Lemma}
\newtheorem{prop}[theorem]{Proposition}
\newtheorem{defin}[theorem]{Definition}

\newcommand{\sph}{\mathbb{S}^{n-1}}

\newcommand{\rn}{\mathbb{R}^{n}}
\newcommand{\p}{Pos_{n,2d}}

\newcommand{\sq}{Sq_{n,2d}}
\newcommand{\ip}[2]{\langle #1,#2 \rangle }
\newcommand{\st}{\hspace{2mm} \mid \hspace{2mm}}
\newcommand{\binomi}[2]{\left(\genfrac{}{}{0 pt}{}{#1}{#2}\right)}
\numberwithin{theorem}{section}
\numberwithin{equation}{section}
\begin{document}
\author{Grigoriy Blekherman}

\title{Dimensional Differences between Nonnegative Polynomials and Sums of Squares}
\begin{abstract}
We study dimensions of the faces of the cone of nonnegative polynomials and the cone of sums of squares; we show that there are dimensional differences between corresponding faces of these cones. These dimensional gaps occur in all cases where there exist nonnegative polynomials that are not sums of squares. As either the degree or the number of variables grows the gaps become very large, asymptotically the gaps approach the full dimension of the vector space of polynomials in $n$ variables of degree $2d$. The gaps occur generically, they are not a product of selecting special faces of the cones. Using these dimensional differences we show how to derive inequalities that separate nonnegative polynomials from sums of squares; the inequalities will hold for all sums of squares, but will fail for some nonnegative polynomials.
\end{abstract}
\maketitle

\section{Introduction}
The relationship between nonnegative polynomials and sums of squares has been studied since Hilbert's seminal paper of 1888. In it Hilbert showed that a nonnegative polynomial in $n$ variables of even degree $2d$ has to be a sum of squares of polynomials only in the following cases: the polynomial is univariate $n=1$, the polynomial is quadratic $2d=2$, or the polynomial is in 2 variables and has degree 4, $n=2$ and $2d=4$. In all other cases he proved existence of nonnegative polynomials that are not sums of squares. It is remarkable that Hilbert's proof was existential and the first explicit nonnegative polynomial that is not a sum of squares was found only 70 years later by Motzkin \cite{Rez1}, \cite{Rez2}.

Hilbert then showed that every nonnegative polynomial in 2 variables is a sum of squares of rational functions and Hilbert's 17th problem asked to show that this is true for any number of variables. This was shown in the 1920's by Artin and Schreier (see \cite{Prest}). However, there is no known algorithm to compute this representation and we may be forced to use denominators and numerators of very large degree compared to the original degree $2d$. While there are bounds on the degree of polynomials used to make rational functions, they are not encouraging from the point of view of efficiently computing such representations (see \cite{Prest}). 

A nonnegative polynomial can be homogenized and it will remain nonnegative. Therefore for the remainder of this paper we will restrict our attention to the case of homogeneous polynomials (forms).

Let $P_{n,2d}$ be the vector space of forms in $n$ variables of degree $2d$. For a fixed number of variables $n$ and degree $2d$ nonnegative polynomials and sums of squares form closed convex cones in $P_{n,2d}$. We call these cones $Pos_{n,2d}$ and $Sq_{n,2d}$ respectively:

$$Pos_{n,2d}=\left\{p \in P_{n,2d}\hspace{2mm} | \hspace{2mm} p(x)\geq 0 \hspace{2mm} \text{for all} \hspace{2mm} x \in \rn\right\},$$

$$Sq_{n,2d}=\left\{p \in P_{n,2d} \hspace{2mm} | \hspace{2mm} p(x)=\sum q_i^2  \hspace{2mm} \text{for some} \hspace{2mm} q_i \in P_{n,d} \right\}.$$

These cones are very interesting convex objects but their structure and precise relationship with each other is not very well understood except for the cases of $n=2$, the univariate case for nonhomogeneous polynomials, and the case $2d=2$ (see \cite{Sasha1} Sections II.11 and II.12 for these cases). We studied some convexity properties such as the coefficient of symmetry and maximal volume ellipsoids of these cones in \cite{Me1}.

In \cite{Me2} we have shown that if the degree $2d$ is fixed and the number of variables $n$ grows then asymptotically there are significantly more nonnegative polynomials than sums of squares if the degree $2d$ is at least 4. However the relationship between the cones for small values of $n$ and $d$ is not clear.

The precise relationship between the cones $Pos_{n,2d}$ and $Sq_{n,2d}$ is interesting because of issues of computational complexity and practical testing for nonnegativity. It is known that testing whether a polynomial is nonnegative is NP-hard already when the degree is 4 \cite{NP}. On the other hand testing whether a polynomial is a sum of squares can be reduced to a semidefinite programming problem and it is practically quite fast \cite{pablo}.

In this paper we study the faces of the cone of nonnegative polynomials and the cone of sums of squares. In particular we are interested in the dimensions of the faces. It seems that outside of the simple cases of $n=2$ and $2d=2$ very little is known about the possible dimensions of the faces of these cones. 
\subsection{Faces of $\p$ and $\sq$.}

It is easy to describe the faces of $Pos_{n,2d}$. The boundary of the cone $Pos_{n,2d}$ consists of all the forms with at least one zero while the interior consists of strictly positive forms. A facet of $\p$ consists of all the forms with a prescribed zero. If we let $S$ be a set of points in $\rn$ (we should think of $S$ projectively as points in $\mathbb{RP}^{n-1}$) then the forms vanishing on all points of $S$ form a face of $\p$ which we call $Pos_{n,2d}(S)$:
$$\p(S)=\{p \in \p \st p(s)=0 \hspace{2mm} \text{for all} \hspace{2mm} s \in S\}.$$

Moreover any face of $\p$ has a description of this form and the set $S$ can be chosen to be finite. We note that despite this simple description the facial structure of $\p$ "should" be very difficult to fully describe because the problem of testing for nonnegativity is known to be NP-hard.

The faces of the cone of sums of squares are much harder to describe. We will only look at the faces of $\sq$ that have description analogous to the faces of $\p$. For a (projective) set of points $S$ in $\rn$ we let $\sq(S)$ be the face of $\sq$  of forms that vanish on all points of $S$:

$$\sq(S)=\{p \in \sq \st p(s)=0 \hspace{2mm} \text{for all} \hspace{2mm} s \in S\}.$$

We will study the dimensions of the faces $\p(S)$ and $\sq(S)$ and we will establish large dimensional gaps between the faces for the same set $S$. It is possible to view Hilbert's original proof of existence of nonnegative polynomials that are not sums of squares as establishing a dimensional gap of this type. This dimensional point of view was first made explicit in \cite{Rez2}.

\subsection{Bounds on the Dimensions of Faces.}

For a set of points $S \in \rn$ let $I_{1,d}(S)$ be the vector subspace of $P_{n,d}$ of forms that vanish to order at least 1 on $S$:
$$I_{1,d}(S)=\{p \in P_{n,d} \st p(s)=0 \hspace{2mm} \text{for all} \hspace{2mm} s \in S\}.$$

If a nonnegative form $p$ vanishes at a point $s$ then $p$ attains its minimum at $s$ and therefore $p$ must be singular at $s$ or in other words $p$ must vanish to order 2 on $s$. We let $I_{2,2d}(S)$ be the vector subspace of $P_{n,2d}$ of forms that vanish to order at least 2 on every point of $S$:
$$I_{2,2d}(S)=\left\{p \in P_{n,2d} \st \frac{\partial p}{\partial x_i}(s)=0 \hspace{2mm} \text{for all} \hspace{2mm} s\in S \hspace{2mm} \text{and all} \hspace{2mm} i=1\ldots n \right\}.$$

Since every nonnegative form that is zero on $s$ must vanish to order 2 on $s$ it follows that the face $\p(S)$ is contained in $I_{2,2d}(S)$. Now $\p(S)$ is a face of $\p$ and therefore it is a convex cone. We are interested in finding the dimension of $\p(S)$ and in particular we will be interested in showing that in many cases $\p(S)$ is full-dimensional in $I_{2,2d}(S)$. We will also provide some examples where the full-dimensionality does not hold in Sections \ref{special} and \ref{fewer}.

The dimension of $I_{2,2d}(S)$ has been extensively studied previously. Vanishing to order 2 at a point imposes $n$ conditions on the forms in $P_{n,2d}$. Therefore one would expect that generically $I_{2,2d}(S)$ has codimension $n|S|$ in $P_{n,2d}$. Indeed the Alexander-Hirschowitz Theorem states that generically this is the case, except for a small number of exceptions \cite{Miranda}. By showing full-dimensionality of $\p(S)$ in $I_{2,2d}(S)$ we get a handle on the dimension of the face $\p(S)$.

We will call a finite set $S \in \rn$ $d$-independent if $S$ satisfies the following two conditions:
\begin{align}
&\label{first cond}\text{The forms in $I_{1,d}(S)$ share no common zeroes outside of $S$. In other words the}\\
&\nonumber \text{conditions of vanishing on $S$ force no additional zeroes on forms of degree $d$.}\\
&\label{second cond}\text{For any $s \in S$ the forms that vanish to order 2 on $s$ and vanish on the rest of $S$}\\
&\nonumber \text{form a vector space of codimension $|S|+n-1$ in $P_{n,d}$.}&
\end{align}

\noindent The second condition simply states that the constraints of vanishing on $S$ and additionally double vanishing at any point $s \in S$ are all linearly independent.

In Section \ref{dimsection} we will show that if a set $S$ is $d$-independent then the face $\p(S)$ is full dimensional in $I_{2,2d}(S)$. We will also show that the set of configurations that are $d$-independent is open and therefore in order to show that $d$-independence is a generic condition for sets of fixed size $k$ we simply need to provide a single example.

In Section \ref{dset} we construct a $d$-independent set of size $\binom{n+d-1}{d}-n$. This shows that $d$-independence is a generic condition for finite sets of size at most $\binom{n+d-1}{d}-n$ in $\rn$.

In Section \ref{gapsection} we will establish large gaps between faces $\p(S)$ and $\sq(S)$. For sums of squares we will use a different dimensional approach. Let $I^{[2]}_{2d}(S)$ be the vector subspace of $P_{n,2d}$ that is spanned by squares of forms from $I_{1,d}(S)$:
$$I_{2d}^{[2]}(S)=\left\{p \in P_{n,2d} \hspace{2mm} \mid \hspace{2mm} p=\sum \alpha_iq_i^2 \hspace{2mm} \text{for some} \hspace{2mm} q_i \in I_{1,d}(S) \hspace{2mm} \text{and}\hspace{2mm} \alpha_i \in \mathbb{R} \right\}.$$

If a form $p$ is a sum of squares, $p=\sum q_i^2$ and $p$ vanishes on $S$ then each $q_i$ must vanish on $S$ and therefore $p$ is a sum of squares of forms from $I_{1,d}(S)$. Thus it follows that $\sq(S)$ is contained in $I_{2d}^{[2]}(S)$. It is easy to see that $\sq(S)$ is actually full dimensional in $I_{2d}^{[2]}(S)$ since we can pick a basis of $I_{2d}^{[2]}(S)$ consisting of squares and nonnegative linear combinations of these squares will lie in $\sq(S)$.

Therefore we can restrict our attention to the dimension of $I_{2d}^{[2]}(S)$. To bound this dimension we observe that the squares from $I_{1,d}(S)$ can span a vector space of dimension at most $\binom{\dim I_{1,d}(S)+1}{2}$. When the set $S$ is fairly small this bound is excessive and it would be interesting to improve on it. However when the set $S$ is large the bound is often optimal.

Using the above bounds we will show that there exist dimensional differences between faces $\p(S)$ and $\sq(S)$ of size
$$Gap_{n,2d}=\binom{n+2d-1}{2d}-n\binom{n+d-1}{d}+\binom{n}{2}.$$
We note that the numbers $Gap_{n,2d}$ are zero in all the cases where the cones $\p$ and $\sq$ are equal. However $Gap_{n.2d}$ are strictly positive in the cases where there exist nonnegative forms that are not sums of squares. In the smallest cases $n=4$, $2d=4$ and $n=3$, $2d=6$ where $\p$ is strictly bigger than $\sq$ the gap number is 1.

However, as either $n$ or $d$ grow we can see that the dimensional gap $Gap_{n,2d}$ between faces of $\p$ and $\sq$ grows and asymptotically it approaches the full dimension of the vector space $P_{n,2d}$.

In Section \ref{sixpoints} we will completely describe the situation when $S$ is a set of 6 points in $\mathbb{R}^4$. There will be generically a gap of 1 dimension between $\p(S)$ and $\sq(S)$. We will explicitly describe an extra linear constraint satisfied by sums of squares, that is not satisfied by nonnegative forms and we will also provide explicit examples of forms that double vanish on $S$, but are not spanned by squares. These forms can be used together with Lemma \ref{ext} to construct explicit nonnegative forms that are not sums of squares.

In Section \ref{sectiongenineq} we will use these dimensional differences to derive quadratic inequalities that separate nonnegative forms from sums of squares. These inequalities will hold on the cone of sums of squares $\sq$ but they will fail for some nonnegative forms. In Section \ref{sectionexplicitineq} we will give an explicit example of such inequality for forms of degree 4 in 4 variables.

We begin by giving some explicit examples of dimensional gaps between nonnegative forms and sums of squares and also examples of dimensional differences between nonnegative and double vanishing forms. Some details of dimension counts will be omitted, but the proofs will be given in more generality in later sections.

\section{Examples}

\subsection{First Gap: Six Points in $\mathbb{R}^4$.}\label{sectionfirstgap}\hspace{1mm}\\
\indent Let $S$ be the set of 6 points in $\mathbb{R}^4$ that have 1 in two coordinates and 0 in the two other coordinates. There are $\binom{4}{2}=6$ such points. We label the points $s_{ij}$ by the two coordinates in which 1 appears.

Let $I_{1,2}(S)$ be the vector space of all forms of degree 2 that vanish on $S$ and let $I_{2,4}(S)$ be the vector space of forms of degree 4 that double vanish on $S$. Also let $I_4^{[2]}(S)$ be the vector space of forms of degree 4 spanned by squares from $I_{1,2}(S)$. This example will be generalized in Section \ref{dset} where our dimension counts will be rigorously proved in more generality.

It is not hard to show that the set $S$ is $2$-independent. In particular, requiring 6 zeroes on the points of $S$ imposes 6 linearly independent conditions on quadratic forms and therefore the dimension of $I_{1,2}(S)$ is $\binom{5}{2}-6=4$. There is a particularly nice basis for $I_{1,2}(S)$ in which every form factors:
\begin{eqnarray*}
Q_1=x_1(x_1-x_2-x_3-x_4), \hspace{4mm}& Q_2=x_2(x_2-x_1-x_3-x_4)\\
Q_3=x_3(x_3-x_1-x_2-x_4), \hspace{4mm}& Q_4=x_4(x_4-x_1-x_2-x_3).
\end{eqnarray*}

Using this basis for $I_{1,2}(S)$ it is easy to check that the dimension of $I_4^{[2]}$ is $\binom{5}{2}=10$, which happens because all pairwise products of $Q_i$'s are linearly independent in $P_{4,4}$. On the other hand the Alexander-Hirschowitz theorem leads us to expect that the dimension of vector space $I_{2,4}(S)$ of double vanishing forms on $S$ is $\binom{7}{4}-4 \cdot 6=11$. It is not hard to verify that this dimension count is correct for our set $S$.

By Corollary \ref{dindep} $2$-independence of $S$ implies that the face $Pos_{4,4}(S)$ is full dimensional in the vector space $I_{2,4}(S)$. Therefore we get a face $Pos_{4,4}(S)$ which has dimension 11, while the corresponding face $Sq_{4,4}(S)$ has dimension 10.

We briefly explain why the face $Pos_{4,4}(S)$ is full dimensional in $I_{2,4}(S)$. Consider the polynomial $$Q=Q_1^2+Q_2^2+Q_3^2+Q_4^2.$$

It is not hard to show that $Q$ has no zeroes outside of $S$ and furthermore at every $s_{ij}\in S$ the Hessian of $Q$ is positive definite on the vector subspace $s_{ij}^{\perp}$ consisting of vectors perpendicular to $s_{ij}$. This suffices to show that $Q$ can be perturbed in any direction by a double vanishing polynomial in $I_{2,4}(S)$ and it will still remain nonnegative (See Lemma \ref{ext}). It follows that the face $Pos_{4,4}(S)$ of the cone of nonnegative polynomials vanishing on $S$ is full dimensional in $I_{2,4}(S)$ and thus has dimension 11, while the face of the cone of sums of squares $Sq_{4,4}(S)$ has dimension 10.

Since there is a gap of one dimension between the faces there exist double vanishing forms in $I_{2,4}(S)$ that are not spanned by sums of squares. Also, it follows that sums of squares must satisfy an extra linear condition that is not satisfied by the double vanishing forms. It is not hard to check that $x_1x_2x_3x_4$ is indeed a doubly vanishing form on $S$, and using our special basis it is easy to show that $x_1x_2x_3x_4$ is not in $I_4^{[2]}(S)$.

The extra linear constraint satisfied by any form $p$ in $I_4^{[2]}(S)$ can be given by:
$$16p(1,0,0,0)=p\left(1,-1,-1,-1\right).$$
On the other hand this constraint is not satisfied by $x_1x_2x_3x_4$.

We show in Section \ref{sixpoints} that the situation in the case of 6 general points in $\mathbb{R}^4$ is very similar. There is a gap of 1 dimension and we also provide explicitly the extra linear condition satisfied by sums of squares and a double vanishing form that is not in the span of sums of squares.

\subsection{A Special Configuration.}\label{special}\hspace{1mm}\\
\indent We now add the point $(1,1,1,1)$ to the six point set $S$ from above to form a 7 point set $S'$ in $\mathbb{R}^4$. We claim that the nonnegative polynomials vanishing on $S'$ do not form a full dimensional convex set in the vector space $I_{2,4}(S')$ of forms of degree 4 double vanishing on $S'$.

The dimension of $\dim I_{2,4}(S')$ has to be at least 7, since we impose at most $7\cdot 4=28$ constraints on a 35 dimensional vector space $P_{4,4}$. It is not hard to show that $\dim I_{2,4}(S')=7$, which is the expected dimension by the Alexander-Hirchowitz Theorem. On the other hand we will see that $\dim Pos_{4,4}(S')=\dim Sq_{4,4}(S')=6$ so both nonnegative polynomials and sums of squares form convex cones of dimension 6 in the 7 dimensional vector space of double vanishing forms. This is a special situation among configurations of 7 points in $\mathbb{R}^4$.

Using the forms $Q_i$ defined above as the basis of $S$ it is clear that $R_1=Q_1-Q_4$, $R_2=Q_2-Q_4$ and $R_3=Q_3-Q_4$ form a basis of $I_{1,2}(S')$, since $Q_i(1,1,1,1)=-2$ for all $i$. After simplification we see that
\begin{align*}
R_1=(x_1-x_4)(x_1&+x_4-x_2-x_3), \hspace{3mm} R_2=(x_2-x_4)(x_2+x_4-x_1-x_3), \\
&R_3=(x_3-x_4)(x_3+x_4-x_2-x_3).
\end{align*}

The set $S'$ is not 2-independent. It is easy to show that the forms $R_i$ have no common zero outside of $S'$. Therefore $S'$ forces no additional zeroes for forms of degree 2 and the first condition \eqref{first cond} for 2-independence is satisfied.

Now lets look at the second condition \eqref{second cond}. Since $|S'|=7$ we would need for any $s \in S'$ the vector space of forms vanishing on $S'$ and double vanishing on $s$ to have codimension 10 in $P_{4,2}$. The dimension of $P_{4,2}$ is 10 and therefore for any $s \in S'$ we would need to have no nonzero forms that are singular on $s$ and vanishing on the rest of $S'$. However the form
$$(x_1-x_2)^2-(x_3-x_4)^2$$
is actually singular at $(1,1,0,0)$ and it vanishes on the rest of $S'$.

It is easy to show that all pairwise products of forms $R_i$ are linearly independent and therefore the dimension of the vector space $I_4^{[2]}(S')$ spanned by squares from $I_{1,2}(S)$  is 6. This implies that the dimension of the face $Sq_{4,4}(S')$ is also 6. We now show that the dimension of the face $Pos_{4,4}(S')$ is 6 as well.

We observe that the set $S'$ is fixed by the action of symmetric group $\mathbb{S}_4$ which acts by permuting the coordinates. Therefore there is a natural action of $\mathbb{S}_4$ on the double vanishing vector space $I_{2,4}(S')$. It is not hard to see that there is a 2 dimensional subspace of $I_{2,4}(S')$ spanned by symmetric forms, i.e. the forms fixed by $\mathbb{S}_4$. A basis of this subspace is given by the forms $F_1$ and $F_2$:

\begin{equation*}
F_1 =\sum_{i <j} (Q_i-Q_j)^2,
\end{equation*}

\begin{equation*}
F_2 =(Q_1+Q_2+Q_3+Q_4)^2-64x_1x_2x_3x_4.
\end{equation*}

The rest of $I_{2,4}(S')$ is made of two irreducible representations of $\mathbb{S}_4$ corresponding to partitions $(2,2)$ and $(3,1)$. This happens since $I_{1,2}(S')$ corresponds to the standard representation of $\mathbb{S}_4$; this is easy to see from the basis of $R_i$. Now $I^{[2]}_4$ is the symmetric square of $I_{1,2}(S')$ and it is known to split into 3 irreducibles: the trivial representation, spanned by $F_1$, and representation corresponding to partitions $(2,2)$ and $(3,1)$. Since $I^{[2]}_4(S')$ has dimension 6 and $F_2$ is not in $I^{[2]}_4(S')$ it follows that $F_2$ and $I^{[2]}_4(S')$ together span $I_{2,4}(S')$.

Thus $I_{2,4}(S')$ splits into two copies of the trivial representation (spanned by $F_1$ and $F_2$) and two more representations of $\mathbb{S}_4$ corresponding to partitions $(2,2)$ and $(3,1)$. For more details on representations of the symmetric group we refer to \cite{Fulton}.

Now lets pick a basis of $I_{2,4}(S')$ consisting of $F_1$, $F_2$ and 5 forms $G_i$ forming a basis of the other two irreducible sub-representations. Let's take a form $p$ in $Pos_{4,4}(S')$. We can write $p=\alpha_1F_1+\alpha_2F_2+\sum \beta_iG_i$. We claim that the coefficient $\alpha_2$ of $F_2$ must be zero, and therefore $Pos_{4,4}(S')$ is not full dimensional in $I_{2,4}(S')$.

Suppose not. Symmetrize $p$ with respect to the action of $\mathbb{S}_4$ to obtain a new polynomial $\bar{p}$:

$$\bar{p}=\frac{1}{24}\sum_{g \in \mathbb{S}_4} p(gx).$$

Since $p$ is nonnegative, it follows that $\bar{p}$ is an average of nonnegative forms and therefore also nonnegative. By elementary representation theory, only the trivial representation components of $p$ survive the averaging process, so

$$\bar{p}=\alpha_1F_1+\alpha_2F_2.$$

Since $F_1$ is a sum of squares it is clearly nonnegative and therefore $F_1$ is in $Pos_{4,4}(S')$. It is easy to show that for $\bar{p}=\alpha_1F_1+\alpha_2F_2$ to be nonnegative the coefficient $\alpha_1$ of $F_1$ must be positive. But then, since both $F_1$ and $\alpha_1F_1+\alpha_2F_2$ are in $Pos_{4,4}(S')$, we see that $F_1 +\epsilon F_2$ must be in $Pos_{4,4}(S')$ for any small enough $\epsilon$ because $Pos_{4,4}(S')$ is a convex cone.

Now lets restrict $F_1$ and $F_2$ to the line $(x,1,1,1)$. We can see that $F_1$ on this line is equal to $3(x-1)^4$ while $F_2$ on this line is equal to $(x-1)^3(x-9)$. Since $F_1$ has a zero of degree 4 at $(1,1,1,1)$ along this line and $F_2$ has zero of degree 3 it follows that $F_1+\epsilon F_2$ cannot be nonnegative for sufficiently small $\epsilon$.

\subsection{No Nonnegative Polynomials among Double Vanishing Forms.} \label{fewer}\hspace{1mm}\\
\indent As we have seen above it does not have to happen that the face $\p(S)$ is full dimensional in the vector space of double vanishing forms $I_{2,2d}(S)$. We now provide an example where there are no nonnegative forms with a specified zero set, while dimension count shows that many double vanishing forms with this zero set exist.

In order to exclude nonnegative forms we need to require at least $\binom{n+d-1}{d}$ zeroes, otherwise there will exist squares with the specified zeroes. We restrict ourselves to the case of $2d=4$.

We show below that if take a set $S$ of cardinality $\binom{n+1}{2}$ then generically there will exist no nonnegative polynomials that vanish on $S$ (by generic over the real numbers we mean that it holds on an open set of configurations). This suggests an explanation for one aspect of why it is hard to construct nonnegative polynomials that are not sums of squares: if we require enough zeroes to exclude all squares then generically we will not have any nonnegative polynomials left either.

Let $S_{n,2}$ be the set of all vectors in $\rn$ that are partitions of $2$: $$S_{n,2}=\left\{s=(s_1,\dots,s_n)\in \rn \st s_i \geq 0, s_i \in \mathbb{Z},\hspace{1mm} \text{and}\hspace{2mm} \sum s_i=2\right\}.$$

$S_{n,2}$ consists of all vectors $s_{ij}$ with 1 in coordinates $i$ and $j$ and 0 in all other coordinates together with vectors $2e_i$, where $e_i$ are the standard basis vectors.

We claim that the vector space $I_{2,4}(S_{n,2})$ of double vanishing forms on $S_{n,2}$ of degree 4 is spanned by forms $x_ix_jx_kx_l$, where $i,j,k,l$ are distinct indices. In particular we need $n \geq 4$ for the vector space to be non-empty.

Let $p$ be a nonzero form in $I_{2,4}(S_{n,2})$. Since $p$ double vanishes at each standard basis vector $e_i$ it follows that $p$ cannot contain monomials $x_i^4$ or $x_i^3x_j$ for any indices $i,j$. Of the remaining monomials of degree 4 only $x_i^2x_j^2$ does not vanish on the point $s_{ij}$. Since $p$ does vanish on $s_{ij}$ it follows that $p$ does not contain any of these monomials either.

The only monomials left that we are allowed to use are of the form $x_i^2x_jx_k$ and $x_ix_jx_kx_l$. We can exclude $x_i^2x_jx_k$ as follows: let $\alpha$ be the coefficient of $x_i^2x_jx_k$, let $\beta$ be the coefficient of $x_ix_j^2x_k$ and let $\gamma$ be the coefficient of $x_ix_jx_k^2$. The monomials $x_i^2x_jx_k$ and $x_ix_j^2x_k$ are the only monomials left whose $k$-th partial derivative does not vanish on $s_{ij}$. Therefore we see that $\alpha=-\beta$, since $p$ double vanishes on $s_{ij}$. Similarly, $\alpha=-\gamma$ and $\beta=-\gamma$. This can only happen if $\alpha=\beta=\gamma=0$. Thus it follows that $p$ can only contain monomials of the form $x_ix_jx_kx_l$. It is easy to see that each of the monomials $x_ix_jx_kx_l$ does indeed double vanish on $S_{n,2}$, which proves our claim.

It is clear that monomials $x_ix_jx_kx_l$ do not span any nonnegative forms so $I_{2,4}(S_{n,2})$ does not contain any nonnegative forms and it is nonempty for $n \geq 4$. To prove that this situation is generic we observe that for a set $S$ the condition that $I_{2,4}(S)$ does not contain any nonnegative forms is clearly "open", i.e. we can perturb the points in $S$ by a small enough $\epsilon$ to obtain a new set $S'$ and $I_{2,4}(S')$ will not contain any nonnegative polynomials either.

We need to be a little bit careful however. The points $S_{n,2}$ are in fact in very special position from the point of view of Alexander-Hirschowitz theorem. The vector space $I_{2,4}(S_{n,2})$ has a larger dimension than is expected generically. Therefore if we perturb the points to a generic configuration we may end up with an empty vector space of double vanishing forms. Indeed, it is easy to check that we need $n \geq 7$ before the generic dimension $\binom{n+3}{4}-n\binom{n+1}{2}$ becomes positive. However for large $n$ this dimension will asymptotically approach the dimension of the whole space of forms of degree 4, which is $\binom{n+3}{4}$.

\section{Dimension of faces of $\p$}\label{dimsection}

Let $S$ be a finite set in $\rn$. We will find the dimension of $\p(S)$ by establishing that in many cases $\p(S)$ is actually full dimensional in the vector space $I_{2,2d}(S)$ of double vanishing forms on $S$ of degree $2d$. The dimension of $I_{2,2d}(S)$ has been well studied and in the case of a generic set $S$ the dimension of $I_{2,2d}(S)$ is provided by Alexander-Hirschowitz theorem \cite{Miranda}.

Generically one expects that every double zero contributes $n$ new linear conditions and Alexander-Hirschowitz theorem states that this is indeed the case, with a small list of exceptions. Therefore generically be expect that:

$$\dim I_{2,2}d(S)=\dim P_{n,2d}-n|S|.$$

\noindent However, for any set $S$ we know that

$$\dim I_{2,2d}(S)\geq \dim P_{n,2d}-n|S|,$$

\noindent since we impose at most $n|S|$ linearly independent conditions.

We will establish full dimensionality of $\p(S)$ by finding a form $p \in \p(S)$ to which we can add a suitably small multiple of any double vanishing form and it will remain nonnegative:

$$p+\epsilon q \in \p(S) \hspace{2mm} \text{for some sufficiently small} \hspace{2mm} \epsilon \hspace{2mm} \text{and any} \hspace{2mm} q \in I_{2,2d}(S).$$

The form $p$ can be viewed as a certificate of full dimensionality of $\p(S)$. The important point is that $p$ can be any form, in particular we will focus on finding such $p$ that is a sum of squares. This approach follows that of \cite{Rez2} and indeed it can be traced to the original proof of Hilbert.

For a form $p$ let the Hessian $H_p$ of $p$ be the matrix of second derivatives of $p$:

$$H_p=(h_{ij}), \hspace{5mm} \text{where} \hspace{5mm} h_{ij}=\frac{\partial^2 p}{\partial x_i \partial x_j}.$$

We will need an "extension lemma" which follows from Lemma 3.1 of \cite{Rez2}. We note that if a form $p$ vanishes at a point $s$ then by homogeneity $p$ vanishes on a line through $s$. Therefore the vector $s$ is in the kernel of the Hessian of $p$ at $s$: $H_p(s)s=0$.

If a form $p$ is nonnegative then its Hessian at any zero $s$ is positive semidefinite, since 0 is a minimum for $p$. We call a nonnegative form $p$ \textit{round} at a zero $s$ if the Hessian of $p$ at $s$ is positive definite on the subspace $s^{\perp}$ of vectors perpendicular to $s$:
$$p \hspace{2mm} \text{is round at a zero} \hspace{2mm} s \hspace{2mm} \text{if} \hspace{2mm} H_p(s) \hspace{2mm} \text{is positive definite on} \hspace{2mm} s^{\perp}. $$

\noindent For a form $p$ we will let $Z(p)$ denote the zero set of $p$ in $\mathbb{RP}^{n-1}$.

\begin{lemma}\label{ext}
Let $p$ be a nonnegative form with a finite zero set $Z(p)$, and let $q$ be a form such that $q$ vanishes to order 2 on every point in the zero set of $p$. Furthermore suppose that $p$ is round at every point in $Z(p)$.
Then for a sufficiently small $\epsilon$, the form $p+\epsilon q$ is nonnegative.
\end{lemma}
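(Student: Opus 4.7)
The plan is to reduce to nonnegativity on the compact sphere $\sph$ by homogeneity, and then split into a small neighborhood of each point of the finite set $Z(p)\cap\sph$ versus its complement. On the complement of any open neighborhood $U$ of $Z(p)\cap\sph$, the form $p$ attains a positive minimum $\delta>0$ by compactness, while $q$ is bounded on the sphere by some $M$; hence $p+\epsilon q\geq \delta-|\epsilon|M>0$ on $\sph\setminus U$ whenever $|\epsilon|<\delta/M$, so the entire difficulty is local near the zeros.

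Near a zero $s\in Z(p)$, the key observation is that both $p$ and $q$ vanish to order $2$: for $q$ this is the hypothesis, and for $p$ it follows because $p\geq 0$ with $p(s)=0$ makes $s$ a minimum of $p$, forcing $\nabla p(s)=0$. By Euler's identity applied to the first partials, $H_p(s)s=H_q(s)s=0$, so the second-order Taylor behavior of both forms at $s$ is captured by the restrictions of $H_p(s)$ and $H_q(s)$ to the hyperplane $s^\perp$. I parametrize a sphere neighborhood of $s$ by $w\in s^\perp$ small via $w\mapsto (s+w)/\norm{s+w}$; by homogeneity the sign of $p+\epsilon q$ at this point agrees with the sign of
$$(p+\epsilon q)(s+w)=\tfrac{1}{2}w^T\bigl(H_p(s)+\epsilon H_q(s)\bigr)w+O(\norm{w}^3).$$

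Roundness gives a smallest eigenvalue $c>0$ for $H_p(s)|_{s^\perp}$, and by continuity of eigenvalues $H_p(s)+\epsilon H_q(s)$ restricted to $s^\perp$ still has smallest eigenvalue at least $c/2$ once $|\epsilon|$ is small enough. The quadratic term then dominates the cubic remainder for $\norm{w}$ below a threshold depending only on $c$ and the cubic bound, producing nonnegativity in a full sphere neighborhood of $s$. Taking $\epsilon$ small enough to work simultaneously for the finitely many points of $Z(p)$ and for the far-from-zeros bound completes the argument. The main technical care is uniform control of the Taylor remainder combined with the eigenvalue perturbation; neither is difficult individually, but the local estimates must be tracked over the finite zero set and spliced with the compactness bound into a single choice of $\epsilon$.
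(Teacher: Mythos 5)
Your proof is correct and complete in outline: the compactness reduction away from the finitely many zeros, combined with the second-order Taylor expansion at each zero (using that value and gradient of both $p$ and $q$ vanish there, that $H_p(s)s=H_q(s)s=0$ by Euler's identity, and an eigenvalue-perturbation bound on $s^{\perp}$), is exactly the standard argument for this extension lemma. The paper itself does not prove the statement but defers to Lemma 3.1 of \cite{Rez2}, which rests on the same local Hessian analysis, so your argument supplies the intended proof.
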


Now we have an immediate corollary:

\begin{cor} \label{ext2}
Let $S$ be a finite set in $\rn$. Suppose that we can find a nonnegative form $p$ in $\p(S)$ such that $S$ is the zero set of $p$ projectively: $Z(p)=S$ and $p$ is round at every point $s \in S$. Then the face $\p(S)$ is full dimensional is the vector space $I_{2,2d}(S)$ of double vanishing forms on $S$.
\end{cor}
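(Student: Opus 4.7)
The plan is to unpack the definition of ``full dimensional in $I_{2,2d}(S)$'' and then apply Lemma \ref{ext} as a black box. Since every nonnegative form vanishing on $S$ must be singular on $S$, the face $\p(S)$ is a convex cone contained in the vector subspace $I_{2,2d}(S)$. Saying that $\p(S)$ is full dimensional in $I_{2,2d}(S)$ is the same as saying that its linear span (as a subset of $P_{n,2d}$) equals $I_{2,2d}(S)$. So it suffices to exhibit, for every $q \in I_{2,2d}(S)$, two elements of $\p(S)$ whose difference is a nonzero scalar multiple of $q$.

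The candidate certificate is the form $p$ from the hypothesis. First I would note that any $q \in I_{2,2d}(S)$ vanishes to order at least $2$ on every point of $Z(p) = S$ by definition of $I_{2,2d}(S)$. Since $p$ is nonnegative with finite zero set $S$ and is round at every $s \in S$, Lemma \ref{ext} applies to both $q$ and $-q$: there is some $\epsilon_q > 0$ such that $p + \epsilon_q q$ and $p - \epsilon_q q$ are nonnegative. Both of these forms obviously vanish on $S$ (since $p$ and $q$ do), hence both lie in $\p(S)$.

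Now the conclusion is immediate: the vector $2\epsilon_q q = (p + \epsilon_q q) - (p - \epsilon_q q)$ lies in the real linear span of $\p(S)$, so $q$ itself lies there. Since $q$ was arbitrary, $\mathrm{span}\,\p(S) \supseteq I_{2,2d}(S)$, and combined with $\p(S) \subseteq I_{2,2d}(S)$ this forces equality of spans, which is exactly full dimensionality of $\p(S)$ inside $I_{2,2d}(S)$.

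There is essentially no obstacle: the content is entirely in Lemma \ref{ext}, and the corollary is a one-line consequence once one recognizes that ``full dimensional in $I_{2,2d}(S)$'' is a statement about the linear span of $\p(S)$ and that a convex cone with a relative-interior point (here, $p$) automatically spans its ambient subspace. No uniformity in $\epsilon$ is needed across $q$'s, since we only need the perturbation direction-by-direction to conclude span equality.
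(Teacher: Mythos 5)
Your proof is correct and follows essentially the same route as the paper: use $p$ as the certificate, apply Lemma \ref{ext} to perturb $p$ by an arbitrary $q \in I_{2,2d}(S)$, and conclude full dimensionality from the fact that $\p(S)$ is a convex cone; you merely spell out the span argument (via $p \pm \epsilon_q q$) that the paper leaves implicit. No further comment is needed.
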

\begin{proof}
Let $p \in \p(S)$ be as above. Then by Lemma \ref{ext} for any $q \in I_{2,2d}(S)$ we have $p+\epsilon q \in \p(S)$ for sufficiently small $\epsilon$. Since $\p(S)$ is a convex set it follows that is is full dimensional in $I_{2,2d}(S)$.
\end{proof}

\subsection{Sum of Squares Certificate.}Now suppose that for a finite set $S \in \rn$ we can find a sum of squares form $p=q_1^2+\ldots+q_n^2$ such that $Z(p)=S$ and $p$ is round at every $s \in S$. Since $p$ vanishes on $S$ it implies that all $q_i$ vanish on $S$ and the set $S$ has no forced zeroes for forms of degree $d$. Or in other words, $S$ is cut out by the intersection of hypersurfaces $q_i=0$.

The Hessian of $p$ is the sum of the Hessians of $q_i^2$:$$H_p=\sum_i H_{q^2_i}.$$ Since $q_k(s)=0$ for all $k$ and $s \in S$ it follows that

$$\frac{\partial^2 q_k^2}{\partial x_i \partial x_j}(s)=2\frac{\partial q_k}{\partial x_i}(s)\frac{\partial q_k}{\partial x_j}(s).$$

Therefore we see that the Hessian of $q^2_k$ at any $s \in S$ is actually double the tensor of the gradient of $q_k$ at $s$ with itself: $$H_{q^2_k}(s)=2\nabla q_k\otimes \nabla q_k(s).$$ Since the Hessian of $p$ at $s$ is positive definite on $s^{\perp}$, and it is the sum of the gradients of $q_k$ it follows that the gradients of $q_k$ at $s$ span $s^{\perp}$.

The gradients of $q_k$ at any $s \in S$ cannot span more than $s^{\perp}$ since $\ip{\nabla  q_k}{s}=d\cdot q_k(s)=0$. The gradients of $\nabla q_k$ span a vector space of dimension $n-1$ and thus the forms that double vanish at one $s \in S$ and vanish at all other points in $S$ form a vector space of codimension $n-1$ in $I_{1,d}(S)$.

These two conditions are sufficient for $\p(S)$ to be full dimensional in $I_{2,2d}(S)$.

\begin{theorem}\label{gapzz}
Let $S$ be a finite set in $\rn$ such that $S$ forces no additional zeroes for forms of degree $d$ and for any $s \in S$ the forms in $I_{1,d}(S)$ that double vanish at $s$ form a vector space of codimension $n-1$ in $I_{1,d}(S)$. Then $\p(S)$ is a full dimensional convex cone in $I_{2,2d}(S)$.
\end{theorem}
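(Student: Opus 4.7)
The plan is to construct an explicit sum of squares $p\in Sq_{n,2d}\cap Pos_{n,2d}(S)$ whose zero set equals $S$ projectively and which is round at every point of $S$; once we have this, Corollary \ref{ext2} immediately gives full-dimensionality. The natural candidate is simply
\[
p \;=\; q_1^2 + q_2^2 + \cdots + q_m^2,
\]
where $q_1,\ldots,q_m$ is any basis of $I_{1,d}(S)$. So the entire argument reduces to verifying the two hypotheses of Corollary \ref{ext2} for this choice of $p$.

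First I would check that $Z(p)=S$. Clearly $p$ vanishes on $S$ since each $q_i$ does. Conversely, $p(x)=0$ forces every $q_i(x)=0$, so $x$ is a common zero of all forms in $I_{1,d}(S)$; by the first hypothesis (no forced zeroes outside $S$) this gives $x\in S$ projectively.

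The heart of the argument is the roundness check, which is a linear-algebraic translation of the second hypothesis. As observed in the text preceding the theorem, for $s\in S$ the Hessian of $p$ at $s$ is
\[
H_p(s) \;=\; 2\sum_{i=1}^m \nabla q_i(s)\otimes \nabla q_i(s),
\]
so for any $v\in\mathbb{R}^n$ we have $v^{T}H_p(s)v = 2\sum_i \langle \nabla q_i(s),v\rangle^2$. This is always nonnegative, and it vanishes precisely when $v$ is orthogonal to the span of the gradients $\nabla q_i(s)$. Consider the linear map
\[
\varphi_s\colon I_{1,d}(S)\longrightarrow \mathbb{R}^n,\qquad q\mapsto \nabla q(s).
\]
Its kernel consists of forms in $I_{1,d}(S)$ that double vanish at $s$, which by the second hypothesis has codimension $n-1$ in $I_{1,d}(S)$, so $\operatorname{image}(\varphi_s)$ has dimension $n-1$. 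By Euler's identity, $\langle \nabla q(s),s\rangle = d\cdot q(s)=0$ for every $q\in I_{1,d}(S)$, so the image lies in $s^{\perp}$; since $\dim s^{\perp}=n-1$, the image equals $s^{\perp}$. Thus the gradients $\nabla q_i(s)$ span $s^{\perp}$, and therefore $v^{T}H_p(s)v>0$ for every nonzero $v\in s^{\perp}$. This says exactly that $p$ is round at $s$.

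Having verified both hypotheses of Corollary \ref{ext2}, the conclusion is immediate. The only step that required real content was the identification of the codimension-$(n-1)$ condition with the gradients spanning $s^{\perp}$; everything else is a direct unpacking of the formula for the Hessian of a sum of squares at a common zero. I do not expect any serious obstacles: the genuine technical work was absorbed into the extension Lemma \ref{ext} and Corollary \ref{ext2} proved earlier.
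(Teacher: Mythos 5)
Your proposal is correct and follows essentially the same route as the paper: take $p=\sum q_i^2$ for a basis $q_1,\ldots,q_m$ of $I_{1,d}(S)$, use the no-forced-zeroes hypothesis to get $Z(p)=S$, and translate the codimension-$(n-1)$ hypothesis via the gradient map and Euler's identity into the gradients spanning $s^{\perp}$, hence roundness, hence full-dimensionality by Corollary \ref{ext2}. No gaps.
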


\begin{proof}
Let $q_1, \ldots, q_k$ be a basis of $I_{1,d}(S)$. We claim that $p=\sum q_i^2$ has the properties of Lemma \ref{ext} and therefore by Corollary \ref{ext2} convex cone $\p(S)$ is full dimensional in $I_{2,2d}(S)$.

Since $S$ has no forced zeroes and we picked a basis of $I_{1,d}(S)$ it follows that $q_k$ have no common zeroes outside of $S$ and thus $Z(p)=S$ projectively.

Now choose $s \in S$. Since the forms in $I_{1,d}(S)$ that double vanish at $s$ form a vector space of codimension $n-1$ in $I_{1,d}(S)$ it follows that the gradients of $q_k$ at $s$ must span a vector space of dimension $n-1$. Since $\ip{\nabla q_k}{s}=0$ for all $k$ it follows that they actually span $s^{\perp}$. By the argument about Hessians above it follows that the Hessian of $p$ is positive definite on $s^{\perp}$.
\end{proof}

To avoid working with degenerate configuration instead of requiring that for any $s \in S$ the forms in $I_{1,d}(S)$ that double vanish at $s$ form a vector space of codimension $n-1$ in $I_{1,d}(S)$ we work with configurations where the forms in $I_{1,d}(S)$ that double vanish at $s$ form a vector space of codimension $|S|-n+1$ in $P_{n,d}$. In other words we also require that the conditions of vanishing at all the points of $S$ are linearly independent. This is indeed our definition $d$-independence from before:

\begin{defin}\label{defindindep}
We call a finite set $S$ in $\rn$ \text{d-independent} if $S$ satisfies two following properties:\\
The set $S$ forces no additional zeroes for forms of degree $d$ that vanish on $S$,\\
For any $s \in S$ the forms that vanish to order 2 on $s$ and vanish on the rest of $S$
form a vector space of codimension $|S|+n-1$ in $P_{n,d}$.
\end{defin}

From Theorem \ref{gapzz} we obtain the following immediate corollary:

\begin{cor}\label{dindep}
Suppose that a set $S$ in $\rn$ is $d$-independent. Then the face $\p(S)$ is full dimensional in $I_{2,2d}(S)$.
\end{cor}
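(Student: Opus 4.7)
The plan is to verify that the two conditions of $d$-independence (Definition \ref{defindindep}) imply the two hypotheses of Theorem \ref{gapzz}, and then quote that theorem. Condition (1) of $d$-independence is verbatim the first hypothesis of Theorem \ref{gapzz}, so the only actual work is to translate condition (2), which is phrased as a codimension-in-$P_{n,d}$ statement, into the codimension-in-$I_{1,d}(S)$ statement demanded by Theorem \ref{gapzz}.

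Fix $s \in S$ and let $A \subset P_{n,d}$ be the subspace of forms that vanish on $S \setminus \{s\}$ and double vanish at $s$. Because double vanishing at $s$ forces $f(s) = 0$ via Euler's identity $\sum_i x_i \partial_i f = d f$, the subspace $A$ lies inside $I_{1,d}(S)$; in fact $A$ is exactly the set of forms in $I_{1,d}(S)$ that double vanish at $s$. I will show that condition (2) simultaneously forces $I_{1,d}(S)$ to have codimension exactly $|S|$ in $P_{n,d}$ and $A$ to have codimension exactly $n-1$ in $I_{1,d}(S)$, which is precisely the second hypothesis of Theorem \ref{gapzz}.

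The key dimension estimate is the following two-step upper bound. First, $I_{1,d}(S)$ has codimension at most $|S|$ in $P_{n,d}$ since it is cut out by only $|S|$ vanishing functionals. Second, inside $I_{1,d}(S)$, imposing double vanishing at $s$ adds at most $n-1$ further conditions: the $n$ partial-evaluation functionals $\partial_i \delta_s$ satisfy the Euler relation $\sum_i s_i (\partial_i \delta_s) = d \,\delta_s$, and $\delta_s$ already annihilates $I_{1,d}(S)$, so only $n-1$ of the partial conditions are new. Composing these two estimates, $A$ has codimension at most $|S| + (n-1)$ in $P_{n,d}$. Condition (2) of $d$-independence asserts this upper bound is attained, which forces both intermediate inequalities to be equalities.

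With both hypotheses of Theorem \ref{gapzz} in hand, Theorem \ref{gapzz} immediately yields that $\p(S)$ is full dimensional in $I_{2,2d}(S)$. There really is no obstacle — the statement is, as advertised, an immediate corollary; the only content is the small Euler-identity bookkeeping that converts between the two codimension formulations.
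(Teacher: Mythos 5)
Your proof is correct and follows the same route as the paper, which states the corollary as an immediate consequence of Theorem \ref{gapzz}; the Euler-identity bookkeeping you supply is exactly the translation the paper sketches in the paragraph preceding Definition \ref{defindindep} (where the codimension-$(n-1)$-in-$I_{1,d}(S)$ condition is reformulated as the codimension-$(|S|+n-1)$-in-$P_{n,d}$ condition). Nothing is missing.
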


Using standard methods (vanishing determinants) it is easy to write the set of all configuration of $k$ points in $\mathbb{RP}^{n-1}$ that are $d$-independent as a complement of a closed algebraic set. Therefore we obtain the following proposition:

\begin{prop}\label{open}
The set of configuration of $k$ points in $\mathbb{RP}^{n-1}$ that are $d$-independent is open.
\end{prop}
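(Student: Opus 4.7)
The plan is to show each of the two defining conditions of $d$-independence is open in the Euclidean topology on the configuration space; their intersection is then open as well.

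Condition \eqref{second cond} is pure linear algebra. Fix $s\in S$. The functionals on $P_{n,d}$ corresponding to vanishing on $S\setminus\{s\}$ together with vanishing to order $2$ at $s$ form a list of $|S|+n-1$ linear functionals (the $n$ gradient conditions at $s$ reduce to $n-1$ independent ones modulo $\mathrm{ev}_s$ by Euler's identity). Expressing these in a monomial basis of $P_{n,d}$ yields a $(|S|+n-1)\times\binom{n+d-1}{d}$ matrix $M_s(S)$ whose entries are polynomials in the coordinates of the points in $S$. Condition \eqref{second cond} at $s$ is exactly that $M_s(S)$ has maximal row rank, i.e.\ that some $(|S|+n-1)$-minor of $M_s(S)$ is nonzero, which is a Zariski-open condition. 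Intersecting over the finitely many $s\in S$ preserves openness.

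For condition \eqref{first cond} I argue in the Veronese picture. Let $\nu\colon\mathbb{RP}^{n-1}\hookrightarrow\mathbb{RP}^{N-1}$, with $N=\binom{n+d-1}{d}$, be the degree-$d$ Veronese embedding and let $V=\nu(\mathbb{RP}^{n-1})$. Under condition \eqref{second cond}, the $k$ vectors $\nu(s_1),\ldots,\nu(s_k)$ are linearly independent and span a $(k-1)$-plane $L(S)\subset\mathbb{RP}^{N-1}$, and condition \eqref{first cond} becomes $V\cap L(S)=\nu(S)$ set-theoretically. Fix a $d$-independent $S_0$. The incidence $W=\{(S,y)\in(\mathbb{RP}^{n-1})^k\times\mathbb{RP}^{n-1}:\nu(y)\in L(S)\}$ is a closed algebraic subvariety (cut out, on the locus where \eqref{second cond} holds, by suitable $(k+1)\times(k+1)$-minors), and the projection $\pi$ onto the configuration space is proper because $\mathbb{RP}^{n-1}$ is compact. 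Choose disjoint open balls $B_i$ around each $s_i^{(0)}\in S_0$ and set $B=\bigcup_i B_i$. Then $W\cap\bigl((\mathbb{RP}^{n-1})^k\times(\mathbb{RP}^{n-1}\setminus B)\bigr)$ is closed, and its $\pi$-image is closed and avoids $S_0$ by condition \eqref{first cond} for $S_0$. Hence for $S$ sufficiently close to $S_0$, every $y$ with $\nu(y)\in L(S)$ already lies in some $B_i$.

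The remaining and most delicate step is to rule out, for $S$ close to $S_0$, any point of $V\cap L(S)\cap B_i$ other than $\nu(s_i)$. Here condition \eqref{second cond} re-enters as a local transversality statement: dualizing, the $k+n-1$ functionals in \eqref{second cond} become the vectors $\{\nu(s_j)\}_{j\neq i}\cup\{\nu(s_i),\partial_1\nu(s_i),\ldots,\partial_{n-1}\nu(s_i)\}\subset\mathbb{R}^N$, and their linear independence translates into $T_{\nu(s_i^{(0)})}V\cap L(S_0)=\{\nu(s_i^{(0)})\}$ projectively. Parametrizing $V$ around $\nu(s_i^{(0)})$ by a smooth chart $\phi\colon\mathbb{R}^{n-1}\to V$ and composing with a local projection modulo $L(S)$ produces a smooth family of maps $f_S\colon\mathbb{R}^{n-1}\to\mathbb{R}^{N-k}$ whose zero locus encodes $V\cap L(S)$ near $\nu(s_i^{(0)})$; the transversality translates into the injectivity of $df_{S_0}(0)$. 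Injectivity of a derivative is an open condition, so by the local immersion theorem $f_S^{-1}(0)$ remains a single point in a fixed small neighborhood of $0$ for all $S$ near $S_0$, and that point must be $\nu(s_i)$ by construction. Combined with the global step, this gives $V\cap L(S)=\nu(S)$ for all $S$ near $S_0$, establishing the proposition. The most delicate part of the write-up will be translating the codimension count in \eqref{second cond} cleanly into the required tangent-space injectivity and applying the immersion theorem uniformly in $S$.
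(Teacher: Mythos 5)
Your proof is correct, but it takes a genuinely different --- and substantially more careful --- route than the paper, which disposes of the proposition in one sentence by asserting that failure of $d$-independence is expressible through vanishing determinants, so that the good locus is the complement of a closed algebraic set. That assertion is unproblematic for condition \eqref{second cond}, which, as you observe, is a maximal-rank condition on a matrix whose entries are polynomial in the coordinates of the points. But condition \eqref{first cond} --- non-existence of real common zeros outside $S$ --- is the projection of a non-closed semialgebraic set and is not by itself obviously an open condition; without \eqref{second cond}, a perturbation could in principle split a higher-order tangency of $L(S)$ with the Veronese variety into new real intersection points. Your two-step argument addresses exactly this: the properness/compactness step excludes new intersection points away from the $s_i$, and the transversality encoded by \eqref{second cond} (equivalently $T_{\nu(s_i)}V\cap L(S_0)=\{\nu(s_i)\}$) excludes them near each $s_i$. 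What your approach buys is a complete proof that makes visible why the two conditions must be taken together; what it costs is length. One refinement for the write-up: citing the local immersion theorem is not quite enough, since it yields injectivity of $f_S$ on a neighborhood that could a priori shrink as $S$ varies. You should instead run the quantitative estimate --- fix a left inverse of $df_{S_0}(0)$ of norm $C$ and a convex neighborhood $U$ of $0$ on which $\bigl\|df_S(x)-df_{S_0}(0)\bigr\|<1/C$ for all $S$ near $S_0$; then $\|f_S(x)-f_S(y)\|\geq(1/C-\varepsilon)\|x-y\|$ forces $f_S$ to be injective on the fixed $U$, so $f_S^{-1}(0)\cap U$ is the single point $\phi^{-1}(\nu(s_i))$. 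With that adjustment the argument is complete.
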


In the following section we will actually construct an example of a $d$-independent set of cardinality $\binom{n+d-1}{d}-n$. This will show that $d$-independence is a condition that holds on an open set for all $k \leq \binom{n+d-1}{d}-n$, i.e. it is generic. However, we strongly suspect that $d$-independence should be generic in a stronger sense: it should hold on an open set whose closure is all of $\mathbb{RP}^{n-1}$, or in other words "almost any" configuration of $k\leq \binom{n+d-1}{d}-n$ points is $d$-independent.

Indeed we show in Section \ref{sixpoints} that any set of 6 points in $\mathbb{RP}^3$  in general linear position is 2-independent and in Lemma 2.6 of \cite{Rez2} it was shown that any set of 7 points in $\mathbb{RP}^2$ with no 4 on a line and not all on a quadratic is $3$-independent.

\section{A $d$-independent Set.}\label{dset}

Define $\bar{S}_{n,d}$ to be the set of points in $\rn $ that correspond to nonnegative integer partitions of $d$:

$$\bar{S_{n,d}}=\left\{(\alpha_1,\ldots,\alpha_n) \in \rn\hspace{.4mm} | \hspace{.9mm}\alpha_i \in \mathbb{Z}, \alpha_i \geq 0, \sum_{i=1}^n \alpha_i=d\right\}.$$

We can think of the points in $\bar{S}_{n,d}$ as all the possible exponent choices for monomials in $n$ variables of degree $d$. Therefore $\bar{S}$ contains $\binom{n+d-1}{d}$ points.

Also let $S_{n,d}$ be the set of points in $\rn$ that correspond to partitions of $d$ with at least 2 nonzero parts. The points in $S_{n,d}$ again correspond to monomials of degree $d$ but we do not allow monomials of the form $x_i^d$. Therefore $S_{n,d}$ contains $\binom{n+d-1}{d}-n$ points.

The following proposition is taken from \cite{Rez3} p.31 and has been known for at least a hundred years. We reproduce the proof below.

\begin{prop}\label{wtf}
There are no nontrivial forms in $n$ variables of degree $d$ that vanish on $\bar{S}_{n,d}$. In other words $I_{1,d}(\bar{S}_{n,d})=0$.
\end{prop}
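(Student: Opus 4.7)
The plan is a direct Lagrange-interpolation argument. Since $|\bar{S}_{n,d}| = \binom{n+d-1}{d} = \dim P_{n,d}$, it suffices to exhibit, for each $\gamma \in \bar{S}_{n,d}$, a form $L_\gamma \in P_{n,d}$ with $L_\gamma(\gamma)\neq 0$ and $L_\gamma(\alpha)=0$ for every other $\alpha \in \bar{S}_{n,d}$. Such ``delta forms'' are linearly independent in $P_{n,d}$ and, by the matching dimensions, span it; equivalently, the evaluation map $P_{n,d}\to\mathbb{R}^{\bar{S}_{n,d}}$ is an isomorphism, whose injectivity is precisely the claim.

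The construction rests on one observation: the linear form $s(x) := x_1+\cdots+x_n$ takes the constant value $d$ on $\bar{S}_{n,d}$, so we may use it to homogenize the usual tensor-product Lagrange polynomial. Define
\[
L_\gamma(x) \;=\; \prod_{i=1}^n \prod_{k=0}^{\gamma_i - 1}\Bigl(x_i - \tfrac{k}{d}\,s(x)\Bigr),
\]
a product of $\sum_i \gamma_i = d$ linear factors, hence an element of $P_{n,d}$. For $\alpha\in\bar{S}_{n,d}$, the identity $s(\alpha)=d$ collapses the evaluation to $\prod_i\prod_{k=0}^{\gamma_i-1}(\alpha_i-k)$, a product of falling factorials. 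At $\alpha=\gamma$ this yields $\prod_i \gamma_i!\neq 0$; for $\alpha\neq\gamma$, the identity $\sum\alpha_i=\sum\gamma_i=d$ forces some index with $\alpha_i<\gamma_i$ (otherwise $\alpha_i\ge\gamma_i$ for all $i$ together with equal sums would yield $\alpha=\gamma$), and the corresponding factor $\alpha_i-\alpha_i=0$ kills the product.

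I do not anticipate any substantive obstacle; this is a classical construction. The only subtle ingredient is the substitution of the constant $k$ by the linear form $(k/d)\,s(x)$ to preserve homogeneity, which is legitimate exactly because $s$ is constant on $\bar{S}_{n,d}$. The dimension count and the pigeonhole observation on equal coordinate sums then do the rest.
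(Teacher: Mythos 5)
Your proof is correct and is essentially identical to the paper's: the paper constructs the same homogenized Lagrange delta forms $p_s=\prod_i\prod_{k=0}^{s_i-1}(dx_i-kM)$ with $M=x_1+\cdots+x_n$ (your $L_\gamma$ up to the scalar $d^d$), and concludes by the same dimension count. No issues.
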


\begin{proof}
For every point $s=(s_1,\ldots,s_n) \in \bar{S}_{n,d}$ we will construct a form $p_s \in P_{n,d}$ that vanishes at all points in $\bar{S}_{n,d}$ except for $s$. This shows that the conditions of vanishing at a point in $\bar{S}_{n,d}$ are linearly independent and since $|\bar{S}_{n,d}|=\dim P_{n,d}$ we see that $\dim I_{1,d}(\bar{S}_{n,d})=0$.

Let $M=x_1+\ldots+x_n$. For $i=1 \ldots n$ let $h_i$ be a form defined as follows:
$$h_i(x)=\prod_{k=0}^{s_i-1}(dx_i-kM).$$
\noindent It is clear that the degree of $h_i$ is $s_i$ and $h_i$ vanishes on all partitions in $\bar{S}_{n,d}$ with $i$-th part less than $s_i$. Now let $p_s$ be defined as:

$$p_s=\prod_{i=1}^{n}h_i.$$

\noindent The form $p_s$ has degree $\sum s_i=d$ and it does not vanish on $s$. However for any other partition of $d$ there will be a part that for some $i$ is less than $s_i$. Then $h_i$ will vanish for that $i$ and thus $p_s$ will vanish on any partition of $d$ except for $s$.
\end{proof}

Let $M=x_1+\ldots+x_n$. For $i=1, \ldots ,n$ define a form $Q_i$ as follows:

\begin{equation}\label{qi}
Q_i=\prod_{k=0}^{d-1} (dx_i-{kM}).
\end{equation}

 We observe that each $Q_i$ vanishes on $S_{n,d}$. Let $s=(s_1, \ldots, s_n)\in S_{n,d}$ and consider $Q_i(s)$. We know that $M(s)=d$ because points in $S_{n,d}$ are partitions of $d$, and therefore the term in the defining product of $Q_i$ that corresponds to $k=s_i$ will vanish at $s$, making $Q_i(s)=0$. Therefore $Q_i \in I_{1,d}(S_{n,d})$ for all $i=1 \ldots n$.

We will now show that $Q_i$ actually form a basis of $I_{1,d}(S)$. The fact that we have such a nicely factoring basis is what allows us to prove that $S_{n,d}$ is $d$-independent.

\begin{prop}
The forms $Q_i$ form a basis of $I_{1,d}(S_{n,d})$.
\end{prop}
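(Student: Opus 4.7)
The plan is to show first that $\dim I_{1,d}(S_{n,d}) = n$, and then that the $n$ forms $Q_1,\ldots,Q_n$ are linearly independent; since they already lie in $I_{1,d}(S_{n,d})$, this forces them to be a basis.

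For the dimension count, I would use Proposition \ref{wtf}. That proposition says that the $\binom{n+d-1}{d}$ conditions of vanishing on the points of $\bar{S}_{n,d}$ are all linearly independent (they cut out the zero subspace of $P_{n,d}$). Any subset of linearly independent conditions is linearly independent, so the $|S_{n,d}| = \binom{n+d-1}{d}-n$ conditions coming from $S_{n,d} \subset \bar{S}_{n,d}$ are linearly independent as well. Hence
\[
\dim I_{1,d}(S_{n,d}) \;=\; \binom{n+d-1}{d}-\bigl(\tbinom{n+d-1}{d}-n\bigr) \;=\; n.
\]

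To show the $Q_i$ are linearly independent, the natural test points are the $n$ partitions we threw away when passing from $\bar{S}_{n,d}$ to $S_{n,d}$, namely the vectors $de_j$ for $j=1,\ldots,n$. At such a point $M(de_j)=d$, so
\[
Q_i(de_j) \;=\; \prod_{k=0}^{d-1}\bigl(d\cdot d\delta_{ij}-kd\bigr) \;=\; d^d\prod_{k=0}^{d-1}(d\delta_{ij}-k).
\]
For $i\neq j$ the factor with $k=0$ vanishes, while for $i=j$ the product equals $d! \neq 0$. Thus the matrix $\bigl(Q_i(de_j)\bigr)_{i,j}$ is a nonzero diagonal matrix, which proves that $Q_1,\ldots,Q_n$ are linearly independent.

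Combining the two steps, $Q_1,\ldots,Q_n$ are $n$ linearly independent elements of the $n$-dimensional space $I_{1,d}(S_{n,d})$, and therefore form a basis. There is no real obstacle here beyond noticing the right evaluation points: the factorization $Q_i=\prod_k(dx_i-kM)$ is engineered precisely so that the factor $dx_i - 0\cdot M = dx_i$ kills every $de_j$ with $j\neq i$, and Proposition \ref{wtf} supplies the dimension upper bound for free.
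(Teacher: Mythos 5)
Your proof is correct and takes essentially the same route as the paper's: both arguments rest on Proposition \ref{wtf} together with evaluating the $Q_i$ at (multiples of) the standard basis vectors, where the $k=0$ factor $dx_i$ kills the off-diagonal entries. The only cosmetic difference is in the spanning step: the paper subtracts $\sum_i (\beta_i/d!)Q_i$ from an arbitrary $p \in I_{1,d}(S_{n,d})$ to produce a form vanishing on all of $\bar{S}_{n,d}$, hence zero, whereas you deduce $\dim I_{1,d}(S_{n,d}) = n$ from the linear independence of the point-evaluation functionals (valid since $|\bar{S}_{n,d}| = \dim P_{n,d}$ and their common kernel is zero) and then count.
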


\begin{proof}
We first show that $Q_i$ are linearly independent. Let $e_1 \ldots e_n$ be the standard basis vectors of $\rn$. Its easy to see that $Q_i(e_j)=0$ when $i \neq j$, since $x_i$ divides $Q_i$. On the other hand, $Q_i(e_i)=d!$. Therefore if a linear combination $\alpha_1Q_1+\ldots+\alpha_nQ_n=0$ for some $\alpha_i \in \mathbb{R}$ then by considering the value of this combination at $e_i$ we see that $\alpha_i=0$ and this works for all $i$. Thus $Q_i$ are linearly independent.

We now show that $Q_i$ span $I_{1,d}(S_{n,d})$. Let $p$ be a form in $I_{1,d}(S_{n,d})$ and let $\beta_i=p(e_i)$. Consider the form $$\bar{p}=p-\sum_{i=1}^{n}\frac{\beta_i}{d!}Q_i.$$ It is clear from the above that $\bar{p}$ vanishes on the standard basis vectors $e_i$. Therefore, $\bar{p}$ vanishes not only on $S_{n,d}$ but also on $\bar{S}_{n,d}$. By Proposition \ref{wtf} it follows that $\bar{p}=0$ and therefore $p$ is in the span of $Q_i$.

\end{proof}

We now show that the set $S_{n,d}$ satisfies the two conditions of $d$-independence from Definition \ref{defindindep}.

\begin{lemma}
The set $S$ forces no additional zeroes for forms of degree $d$.
\end{lemma}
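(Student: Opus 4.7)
The plan is to exploit the explicit factored basis $Q_1,\ldots,Q_n$ of $I_{1,d}(S_{n,d})$ constructed in the previous proposition. Since these $Q_i$ span $I_{1,d}(S_{n,d})$, the common zero set of $I_{1,d}(S_{n,d})$ equals the common zero set of the $Q_i$, and forcing no additional zeroes amounts to showing that the only projective common zeros of $Q_1,\ldots,Q_n$ are the points of $S_{n,d}$ itself.

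First I would take an arbitrary $x=(x_1,\ldots,x_n)\in\rn$ with $Q_i(x)=0$ for all $i$. Because $Q_i$ is a product of the linear forms $dx_i-kM$ for $k=0,\ldots,d-1$, vanishing of $Q_i(x)$ means that for each $i$ there exists an integer $k_i\in\{0,1,\ldots,d-1\}$ with $dx_i=k_iM(x)$. I would then split into two cases according to whether the linear form $M=x_1+\cdots+x_n$ vanishes at $x$.

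In the case $M(x)=0$, the relation $dx_i=k_iM(x)=0$ forces $x_i=0$ for every $i$, so $x$ is the trivial zero and contributes nothing projectively. In the case $M(x)\ne 0$, I can rescale $x$ so that $M(x)=d$; then the relations $dx_i=k_iM(x)=k_id$ give $x_i=k_i\in\{0,1,\ldots,d-1\}$ for each $i$, and the constraint $\sum x_i=M(x)=d$ makes $x$ a nonnegative integer partition of $d$ with every part strictly less than $d$. Such a partition necessarily has at least two nonzero parts, so $x\in S_{n,d}$.

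This handles both cases and shows the common projective zero locus of $I_{1,d}(S_{n,d})$ is exactly $S_{n,d}$, which is the first condition of $d$-independence. I do not expect any serious obstacle: the whole argument reduces to the case analysis on $M(x)$, and the only slightly subtle point is noticing that parts all bounded by $d-1$ summing to $d$ cannot be concentrated in a single coordinate, which is exactly what places $x$ in $S_{n,d}$ rather than in $\bar{S}_{n,d}\setminus S_{n,d}$.
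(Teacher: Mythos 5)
Your proposal is correct and follows essentially the same route as the paper: reduce to the common zeros of the factored basis $Q_1,\ldots,Q_n$, split on whether $M(x)=0$, and in the nonzero case rescale to $M(x)=d$ to conclude each coordinate is an integer in $\{0,\ldots,d-1\}$ summing to $d$. Your explicit remark that parts bounded by $d-1$ force at least two nonzero parts is a nice touch the paper leaves implicit.
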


\begin{proof}
Since we know that $Q_i$ form a basis of $I_{1,d}(S_{n,d})$ the statement of the lemma is equivalent to showing that $S_{n,d}$ is projectively equal to $\cap_{i=1}^n Z(Q_i),$
where $Z(Q_i)$ denotes the zero set of $Q_i$.

Let $v=(v_1, \ldots, v_n) \in \cap_{i=1}^n Z(Q_i)$ be a nonzero point and first suppose that $v_1+\ldots+v_n=0$. Then $M(v)=0$ and therefore by equation \eqref{qi} we see that $Q_i(v)=d^dv_i^d$. Since $Q_i(v)=0$ for all $i$ we see that $v=0$ which is a contradiction.

Now suppose that $v_1+\ldots+v_n \neq 0$. By homogeneity we can assume that $v_1+\ldots+v_n=d$. In this case, from equation \eqref{qi} it follows that $Q_i(v)=d^dv_i(v_i-1)\ldots(v_i-d+1).$ Since $Q(v_i)=0$ for all $i$, we see that each $v_i$ is a nonnegative integer between 0 and $d-1$ and $v_1+\ldots+v_n=d$. In other words $v \in S_{n,d}$.
\end{proof}

We know that $|S_{n,d}|=\binom{n+d-1}{d}-n$. For the second condition of $d$-independence we need to show that for any $s \in S_{n,d}$ the vector space of forms double vanishing on $s$ and vanishing on the rest of $S_{n,d}$ has codimension $|S_{n,d}|+n-1=\binom{n+d-1}{d}-1$ in $P_{n,d}$. Since $\dim P_{n,d}=\binom{n+d-1}{d}$ we need to show that the vector space of forms double vanishing at any $s\in S_{n,d}$ and vanishing on the rest of $S_{n,d}$ is one dimensional.

\begin{lemma}
For every point $s \in S_{n,d}$ there is a unique (up to a constant multiple) form in $I_{1,d}(S)$ singular at $s$.
\end{lemma}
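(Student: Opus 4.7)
The plan is to parametrize $I_{1,d}(S_{n,d})$ using the basis $Q_1, \ldots, Q_n$ from the previous proposition: every element of $I_{1,d}(S_{n,d})$ has a unique expression $p = \sum_{i=1}^n \alpha_i Q_i$. Since each $Q_i$ already vanishes on $S_{n,d}$, the singularity requirement at a fixed $s \in S_{n,d}$ reduces to the linear equation $\sum_i \alpha_i \nabla Q_i(s) = 0$, and the lemma is equivalent to saying that the $n$ gradient vectors $\nabla Q_i(s)$ span an $(n-1)$-dimensional subspace of $\mathbb{R}^n$.

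The crux is to compute these gradients explicitly, and here the factored form $Q_i = \prod_{k=0}^{d-1}(dx_i - kM)$ does all the work. Applying the product rule yields
$$\frac{\partial Q_i}{\partial x_j}(s) = \sum_{k=0}^{d-1}(d\delta_{ij} - k)\prod_{l \neq k}(dx_i - lM)(s).$$
Now $M(s) = d$ and every component of $s$ satisfies $0 \leq s_i \leq d-1$ (using that $s$ has at least two nonzero parts), so the factor $(dx_i - kM)(s) = d(s_i - k)$ vanishes for precisely one value of $k$, namely $k = s_i$. Consequently $\prod_{l \neq k}(dx_i - lM)(s)$ is zero unless $k = s_i$, and only one term of the sum survives, giving
$$\nabla Q_i(s) = c_i(s)\bigl(d e_i - s_i \mathbf{1}\bigr), \qquad c_i(s) := d^{d-1}\prod_{\substack{l = 0 \\ l \neq s_i}}^{d-1}(s_i - l) \neq 0,$$
where $\mathbf{1} = (1,\ldots,1)$ and $e_i$ is the $i$-th standard basis vector.

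Substituting $\beta_i = \alpha_i c_i(s)$, the system $\sum_i \alpha_i \nabla Q_i(s) = 0$ becomes $d\beta_i = \sum_j s_j \beta_j$ for every $i$. The right-hand side is independent of $i$, so all $\beta_i$ must share a common value $\beta$; conversely any constant vector $(\beta,\ldots,\beta)$ satisfies the system, since $\sum_j s_j = d$. Hence the kernel is exactly one-dimensional, and unwinding the substitution produces a unique-up-to-scalar singular form $p = \sum_i Q_i/c_i(s)$.

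I anticipate no substantial obstacle: the whole argument is powered by the fact that exactly one factor of each $Q_i$ vanishes at $s$, which is precisely the point of the factored basis. The one delicate bookkeeping step is verifying $c_i(s) \neq 0$ for every $i$, which requires $s_i \neq d$ and therefore depends crucially on restricting to $S_{n,d}$ (partitions with at least two nonzero parts) rather than $\bar{S}_{n,d}$.
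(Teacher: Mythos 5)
Your proof is correct and follows essentially the same route as the paper: expand in the factored basis $Q_i$, observe that only the vanishing factor $dx_i-s_iM$ contributes to $\nabla Q_i(s)$, and show the resulting $n\times n$ gradient matrix has a one-dimensional kernel (the paper writes it as $dI$ minus a rank-one matrix with constant columns; your version is the transpose with constant rows, and your off-diagonal entry $-s_i$ is in fact the correct derivative of $dx_i-s_iM$ with respect to $x_j$). Your explicit solution of the linear system and the paper's rank inequality $\operatorname{rank}(dI-C)\geq n-\operatorname{rank} C$ are interchangeable endgames.
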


\begin{proof}
Let $s=(s_1,\ldots, s_n) \in S_{n,d}$ and let $p \in I_{1,d}(S_{n,d})$ be a form singular at $s$.

Since $Q_i$ form a basis of $I_{1,d}(S_{n,d})$, we may assume that $p=\alpha_1Q_1+\ldots+\alpha_nQ_n$. Now let $A=(a_{ij})$ be a $n \times n$ matrix with entries $$a_{ij}=\frac{\partial Q_i}{\partial x_j}(s).$$

The statement of the lemma is equivalent to showing that the rank of $A$ is $n-1$. Recall from equation \eqref{qi} the definition of $Q_i$:

$$Q_i=\prod_{k=0}^{d-1} dx_i-{kM}.$$

The form $Q_i$ vanishes at $s$ because the term $dx_i-{s_iM}$ corresponding to $k=s_i$ vanishes at $s$. Therefore, the only nonzero term in $\displaystyle \frac{\partial Q_i}{\partial x_j}$ evaluated at $s$ will come from differentiating out $dx_i-{s_iM}$. Now let

$$P_{s_i}=\frac{Q_i}{dx_i-{s_iM}}.$$
We observe that $P_{s_i}(s) \neq 0$ since we removed from $Q_i$ the only factor that vanishes at $s$.

Recall that $M=x_1+\ldots+x_n$ and therefore if we differentiate out $dx_i-{s_iM}$ from $Q_i$ with respect to $x_j$ and evaluate it at $s$ we see that

 \begin{displaymath}
   \frac{\partial Q_i}{\partial x_j}(s) = \left\{
     \begin{array}{lr}
       P_{s_i}(s)\left(d-{s_j}\right) &  \text{if} \hspace{2mm}i=j\\
       -P_{s_i}(s)s_j  & \text{if} \hspace{2mm}i \neq j.
     \end{array}
   \right.
\end{displaymath}

Since $P_{s_i}(s) \neq 0$ we can divide the $i$-th row of $A$ by ${P_{s_i}(s)}$ to obtain matrix $B=(b_{ij})$ where

 \begin{displaymath}
   b_{ij} = \left\{
     \begin{array}{lr}
       d-s_j &  \text{if} \hspace{2mm}i=j\\
       -s_j  & \text{if} \hspace{2mm}i \neq j.
     \end{array}
   \right.
\end{displaymath}

We obtained $B$ from multiplying rows of $A$ by nonzero numbers and therefore the rank of $B$ is equal to the rank of $A$.

Since $s$ is a partition of $d$ it is clear that the vector consisting of all 1's is in the kernel of $B$. Now let $C=(c_{ij})$ be the matrix with $j$-th column having the same entry $s_j$, i.e. $c_{ij}=s_j$. We observe that the rank of $C$ is 1 and $B=dI-C$ where $I$ is the identity matrix. Therefore we know that $\text{rank} \hspace{.5mm} B \geq \text{rank} \hspace{.5mm} I - \text{rank} \hspace{.5mm} C=n-1$. Since we already found a vector in the kernel of $B$ it follows that the rank of $B$ is $n-1$.

\end{proof}

We have now shown that the set $S_{n,d}$ is $d$-independent and together with Proposition \ref{open} this shows that $d$-independence is a generic condition for sets $k$ points in $\rn$ with $k\leq \binom{n+d-1}{d}-n$. We now use this to find large gaps the faces of $\p$ and $\sq$.

\section{Large Dimensional Gaps between $\p(S)$ and $\sq(S)$}\label{gapsection}
We now establish large gaps between $\p(S)$ and $\sq(S)$.

\begin{theorem}
Let $S$ be a $d$-independent set of $k$ points in $\rn$. Then the dimension of $\p(S)$ is at least $\binom{n+2d-1}{2d}-kn$ while the dimension of $\sq(S)$ is at most $\binomi{\binom{n+d-1}{d}-k+1}{2}$.
\end{theorem}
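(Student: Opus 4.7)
The plan is to prove the two bounds separately, leveraging the machinery already developed in the excerpt.

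For the lower bound on $\dim \p(S)$, I will invoke Corollary \ref{dindep}: since $S$ is $d$-independent, $\p(S)$ is full dimensional in the vector space $I_{2,2d}(S)$ of forms of degree $2d$ double vanishing on $S$. Hence $\dim \p(S) = \dim I_{2,2d}(S)$. Now double vanishing at a single point imposes at most $n$ linearly independent conditions on $P_{n,2d}$ (by Euler's identity the naïve $n+1$ conditions reduce to $n$, but I only need an upper bound of $n$ so this is automatic). Summing over the $k$ points of $S$ gives at most $kn$ conditions in total on the $\binom{n+2d-1}{2d}$-dimensional ambient space, yielding
\[
\dim \p(S) = \dim I_{2,2d}(S) \geq \binom{n+2d-1}{2d} - kn.
\]

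For the upper bound on $\dim \sq(S)$, I will use the containment $\sq(S) \subseteq I_{2d}^{[2]}(S)$ established in the introduction, so it suffices to bound $\dim I_{2d}^{[2]}(S)$. By polarization, $2qr = (q+r)^2 - q^2 - r^2$, so $I_{2d}^{[2]}(S)$ is spanned by the pairwise products $q_iq_j$ for $q_1,\ldots,q_m$ a basis of $I_{1,d}(S)$, where $m = \dim I_{1,d}(S)$. There are $\binom{m+1}{2}$ such products (counted up to symmetry), giving $\dim I_{2d}^{[2]}(S) \leq \binom{m+1}{2}$. The remaining task is to identify $m$. The second condition of $d$-independence in Definition \ref{defindindep} asserts that for each $s \in S$, vanishing on $S$ together with vanishing to order $2$ at $s$ cut out a subspace of codimension $|S| + n - 1$ in $P_{n,d}$; in particular the $|S| = k$ linear conditions of vanishing on the points of $S$ are themselves linearly independent, so
\[
\dim I_{1,d}(S) = \binom{n+d-1}{d} - k.
\]
Substituting yields the claimed bound $\dim \sq(S) \leq \binom{\binom{n+d-1}{d} - k + 1}{2}$.

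Neither step contains a serious obstacle: both are immediate consequences of $d$-independence combined with results already proved (Corollary \ref{dindep} for the first, polarization and the containment $\sq(S) \subseteq I_{2d}^{[2]}(S)$ for the second). The only subtle point to flag is that extracting $\dim I_{1,d}(S) = \binom{n+d-1}{d} - k$ from the second condition of $d$-independence (rather than directly from the first) requires the observation that imposing a codimension-$(|S|+n-1)$ constraint forces the weaker $|S|$ vanishing constraints to be independent as well. Once that is noted, the theorem assembles in a few lines.
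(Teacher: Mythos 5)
Your proposal is correct and follows essentially the same route as the paper's own proof: Corollary \ref{dindep} plus the count of at most $kn$ conditions for the lower bound, and the containment $\sq(S) \subseteq I_{2d}^{[2]}(S)$ together with the $\binomi{\dim I_{1,d}(S)+1}{2}$ bound on spans of squares for the upper bound. Your extra remark justifying $\dim I_{1,d}(S)=\binom{n+d-1}{d}-k$ from the second condition of $d$-independence is a detail the paper leaves implicit, but it does not change the argument.
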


\begin{proof}
The dimension of $I_{2,2d}(S)$ is at least $\binom{n+2d-1}{2d}-kn$ since we are imposing at most $kn$ linearly independent conditions by forcing forms to double vanish at all points of $S$. From Corollary \ref{dindep} we know that $\p(S)$ is full dimensional in $I_{2,2d}(S)$ and thus the bounds for the dimension of $\p(S)$ follows.

Since $S$ is $d$-independent we know that the dimension of $I_{1,d}(S)$ is $\binom{n+d-1}{d}-k$. We can have at most $\binomi{\dim I_{1,d}(S)+1}{2}$ linearly independent  products coming from $I_{1,d}(S)$ and therefore the dimension of $I^{[2]}_{2d}(S)$ is at most $\binomi{\binom{n+d-1}{d}-k+1}{2}$. Since $\sq(S)$ is contained in $I^{[2]}_{2d}(S)$ the bound for $\sq(S)$ follows.
\end{proof}

Let $G_{n,2d}(k)$ be the size of the gap that we can show exists between $\p(S)$ and $\sq(S)$ for a $d$-independent set $S$ of size $k$:

\begin{equation}
G_{n,2d}(k)=\binom{n+2d-1}{2d}-kn-\binomi{\binom{n+d-1}{d}-k+1}{2}.
\end{equation}

From Section \ref{dset} we know that there exist $d$-independent sets of any cardinality $k \leq \binom{n+d-1}{d}-n$. We want to know the smallest $k$ where we can show that the gaps exists and we would also like to know the largest gap size. In other words we want to find the least $k$ for which $G_{n,2d}(k) > 0$ and we want to find the maximum of $G_{n,2d}(k)$.

\begin{prop}\label{uglybounds}
The function $G_{n,2d}(k)$ is maximized at $k=\binom{n+d-1}{n}-n$. Its value and the largest gap are
\begin{equation}\label{lessugly}
\binom{n+2d-1}{2d}-n\binom{n+d-1}{d}+\binom{n}{2}.
\end{equation}
The smallest value of $k$ to make $G_{n,2d}(k)$ positive is the smallest integer strictly greater than:
\begin{equation}\label{fugliness}
\binom{n+d-1}{d}-n+\frac{1}{2}-\sqrt{\left(n-\frac{1}{2}\right)^2+2\binom{n+2d-1}{2d}-2n\binom{n+d-1}{d}}.
\end{equation}

\end{prop}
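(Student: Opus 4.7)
The plan is to reduce everything to a one-variable concave quadratic via the substitution $m := N - k$ where $N := \binom{n+d-1}{d}$. A direct rewrite of the defining formula gives
\begin{equation*}
G_{n,2d}(k) = \binom{n+2d-1}{2d} - Nn + f(m), \qquad f(m) := mn - \binom{m+1}{2},
\end{equation*}
so both assertions reduce to understanding $f$ as a function of an integer $m \geq 0$.

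For the maximization claim, I would inspect the first difference
\begin{equation*}
f(m) - f(m-1) = n - m,
\end{equation*}
which is positive for $m < n$, zero for $m = n$, and negative for $m > n$. Hence $f$ attains its integer maximum at $m = n$ (tied with $m = n-1$, but only $m \geq n$ lies in the admissible range $k \leq N-n$ where $d$-independent sets are guaranteed to exist by Section~\ref{dset}). Plugging in $m = n$ yields $f(n) = n^2 - n(n+1)/2 = \binom{n}{2}$, and therefore the maximum value of $G_{n,2d}$ is
\begin{equation*}
\binom{n+2d-1}{2d} - n\binom{n+d-1}{d} + \binom{n}{2},
\end{equation*}
attained at $k = N - n = \binom{n+d-1}{d} - n$, matching \eqref{lessugly}.

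For the threshold claim, I would treat $G_{n,2d}(k) > 0$ as a quadratic inequality in $m$. Multiplying by $-2$, it becomes
\begin{equation*}
m^2 - (2n-1)m + 2n\binom{n+d-1}{d} - 2\binom{n+2d-1}{2d} < 0,
\end{equation*}
a concave-up condition with roots
\begin{equation*}
m_\pm = \left(n - \tfrac{1}{2}\right) \pm \sqrt{\left(n - \tfrac{1}{2}\right)^2 + 2\binom{n+2d-1}{2d} - 2n\binom{n+d-1}{d}}
\end{equation*}
by the quadratic formula. Hence $G_{n,2d}(k) > 0$ exactly for $m_- < m < m_+$, and the largest integer $m$ satisfying this is the largest integer strictly below $m_+$. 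Translating back via $k = N - m$, the smallest $k$ with $G_{n,2d}(k) > 0$ is the smallest integer strictly greater than $N - m_+$, which simplifies to the expression in \eqref{fugliness}.

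No step is genuinely hard: the argument is a change of variables plus the quadratic formula. The only tiny subtlety is the rounding --- in particular the ``strictly greater than'' phrasing in \eqref{fugliness} is needed because $m_+$ might accidentally land on an integer, in which case $m = m_+$ would give $G_{n,2d}(k) = 0$ rather than strictly positive.
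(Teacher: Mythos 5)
Your proposal is correct and follows essentially the same route as the paper: recognize $G_{n,2d}$ as a concave quadratic, locate its vertex (at $k=\binom{n+d-1}{d}-n+\tfrac12$, equivalently your $m=n-\tfrac12$... with the integer maximum at the two neighboring integers), and apply the quadratic formula for the positivity threshold. Your substitution $m=\binom{n+d-1}{d}-k$ and the first-difference argument are cosmetic variants of the paper's computation, and your explicit handling of the tie at $m=n-1$ versus $m=n$ via the admissible range $k\le\binom{n+d-1}{d}-n$ is a small but welcome extra care.
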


Before we prove Proposition \ref{uglybounds} we make several remarks. First we observe that the largest gap number
$$\binom{n+2d-1}{2d}-n\binom{n+d-1}{d}+\binom{n}{2}$$
is zero in all the cases where the cones $\p$ and $\sq$ are equal. However it is strictly positive in the cases where exist nonnegative forms that are not sums of squares. In the smallest cases $n=4$, $2d=4$ and $n=3$, $2d=6$ where $\p$ is strictly larger than $\sq$ the gap number is 1.

However, as either $n$ or $d$ we can see that the dimensional gap between faces of $\p$ and $\sq$ grows and asymptotically it approaches the full dimension of the vector space $P_{n,2d}$.

We note that the bound from Equation \ref{fugliness} simplifies remarkably for $n=3$. In this case we get the bound of $\binom{d+2}{2}-d-1$ and we need to take the smallest integer above that which leads to
$$k=\binom{d+2}{2}-d.$$
This is actually the correct bound for the case of $n=3$ and we hope to discuss this elsewhere.

However for $n \geq 4$ the formula does not appear to simplify and the bound given is not going to be optimal. This is due to the overcounting in the bound we use that for the dimension of the vector space of squares $I^{[2]}_{2d}(S)$. It would be interesting to improve the bound which should lead to the optimal value of $k$.

We note that for $k=\binom{n+d-1}{d}-n$, which leads to the largest gap, the bound on the dimension of $I^{[2]}_{2d}(S)$ is also optimal generically. We can see that from the example of $d$-independent set $S_{n,d}$ from Section \ref{dset}, which has this cardinality. Indeed in the case of $S_{n,d}$ it is not hard to show that all pairwise products of $Q_i$, which form the basis of $I_{1,d}(S_{n,d})$ are linearly independent in $P_{n,2d}$. This shows that the dimension of $I_{1,d}(S_{n,d})$ is $\binom{n+1}{2}$, which is exactly equal to the bound we use.

We now prove Proposition \ref{uglybounds}.
\begin{proof}
We observe that $G_{n,d}(k)$ is a quadratic function of $k$ with a negative leading coefficient. It is easy to show that the maximum of $G_{n,d}(k)$ occurs at $\displaystyle k=\binom{n+d-1}{d}-n+\frac{1}{2}$. Therefore the maximum value of $G_{n,2d}(k)$ for an integer $k$ will occur with $k=\binom{n+d-1}{d}-n$ and it is a matter of easy simplification to obtain equation \eqref{lessugly}.

The bound in equation \eqref{fugliness} comes from simply calculating the smallest root of $G_{n,2d}(k)$. We skip the routine application of the quadratic formula.
\end{proof}

We now fully describe the situation with respect to $\p(S)$ and $\sq(S)$ for 6 points sets $S$ in $\mathbb{R}^4$.

\section{Six points in $\mathbb{R}^4$} \label{sixpoints}

Let $S=\{s_1, \ldots, s_6\}$ be a set of six points in $\mathbb{R}^4$ in general linear position. We will show that $S$ is 2-independent. In particular this implies that the conditions of vanishing at $s_i \in S$ are linearly independent and therefore $\dim I_{1,2}(S)=\binom{5}{2}-6=4$. It follows that the dimension of the vector space $I^{[2]}_4(S)$ spanned by squares from $I_{1,d}(S)$ is at most $\binom{5}{2}=10$. We will show that the dimension of $I^{[2]}_4(S)$ is indeed 10.

On the other hand Alexander-Hirschowitz Theorem tells us that the dimension of $I_{2,4}(S)$ is $\binom{7}{4}-6 \cdot 4=11$ generically. It is not hard to show that for 6 points in $\mathbb{R}^4$ in general linear position this dimension count is actually correct. Therefore we should have a gap of 1 dimension between $Pos_{4,4}(S)$ and $Sq_{4,4}(S)$. In particular, there is a linear constraint that is satisfied by squares and not satisfied by double vanishing polynomials on $S$. There is also a fourth degree form that double vanishes on $S$ but is not in the span of squares. We identify the extra constraint and a double vanishing form not in the span of squares below.

To every $3$ element subset $T=\{t_1,t_2,t_3\}$ of $\{1, \ldots, 6\}$ we can associate the hyperplane $L_T$ spanned by the vectors $s_{t_1}$, $s_{t_2}$ and $s_{t_3}$.

We want to select a double covering of $s_1, \ldots, s_6$ by 4 hyperplanes of the form $L_T$ with some nice combinatorial properties. We select 4 triples $T_i$ such that any two of them intersect at exactly one element of $\{1,\ldots,6\}$ and each element is contained in precisely two triples. Here is an example of such a covering, which is not unique: $$T_1=\{123\}, \hspace{4mm}T_2=\{145\},\hspace{4mm} T_3=\{246\}, \hspace{4mm}T_4=\{356\}.$$ To every such covering we can associate the complementary covering, where we replace the triple $T_i$ with its complement $\overline{T}_i$. So, in our case, $\overline{T}_1=\{456\}$, $\overline{T}_2=\{236\}$, $\overline{T}_3=\{135\}$ and $\overline{T}_4=\{124\}$. We observe that the complementary covering also shares the property any two triples intersect in exactly one point and every point is contained in exactly two triples.

To each triple $T$ we associate the linear functional with kernel $L_T$. We can think of this functional as the inner product with the unit normal vector to $L_T$, which is unique up to a sign. The choice of sign will not make a difference to us. We let $u_i$ be a unit normal to $L_{T_i}$ and $v_i$ be a unit normal to $L_{\overline{T}_i}$:
\begin{align*}
u_i \hspace{2mm} \text{is a unit vector perpendicular to} \hspace{2mm} L_{T_i},\\
v_i \hspace{2mm} \text{is a unit vector perpendicular to} \hspace{2mm} L_{\overline{T}_i}.
\end{align*}

Vectors $u_i$ and $v_i$ form a pair of bases of $\mathbb{R}^4$. The key is to work with the dual configurations. We define $u_i^*$ to be vectors such that
 \begin{displaymath}
   \ip{u_i^*}{u_j} = \left\{
     \begin{array}{lr}
       1 &  \text{if} \hspace{2mm}i=j\\
       0  & \text{if} \hspace{2mm}i \neq j.
     \end{array}
   \right.
\end{displaymath}
One way to think about $u_i^*$ is that if we form matrix $U$ with rows $u_i$ then $u_i^*$ form the columns of $U^{-1}$. We define vectors $v_i^*$ in the same way for $v_i$.

We will show that the four forms
\begin{align*}
Q_1(x)=\ip{x}{u_1}\ip{x}{v_1}, \hspace{4mm} Q_2(x)=\ip{x}{u_2}\ip{x}{v_2}\\
Q_3(x)=\ip{x}{u_3}\ip{x}{v_3}, \hspace{4mm} Q_4(x)=\ip{x}{u_4}\ip{x}{v_4}
\end{align*}
form a basis of $I_{1,2}(S)$. This factoring basis will allow us to prove 2-independence of $S$, and pairwise products $Q_iQ_j$ with $i \leq j$ will form a basis of $I^{[2]}_4(S)$.

Then we will show that the fourth degree form
$$R=\ip{x}{u_1}\ip{x}{u_2}\ip{x}{u_3}\ip{x}{u_4}$$
is singular at each of $s_i$ but it is not in $I^{[2]}_4(S)$.

Finally, let $Q$ be a form in $I^{[2]}_4(S)$. For any $i$ the form $Q$ satisfies:
\begin{equation}\label{equationextraconstraint}
\ip{v_i^*}{u_i}^2Q(u_i^*)=\ip{u_i^*}{v_i}^2Q(v_i^*).
\end{equation}
On the other hand $R$ will not satisfy any of these constraints, which shows that any one of these constraints is independent of being singular at points $s_i$. Of course, by the dimension count there is only 1 "true" extra linear constraint so we can take any one and the rest will cease being "new".

Before we continue with the proofs we will give an explicit example of the extra form and the explicit linear constraint.

\subsection{Explicit Example.}\label{sectionexplicitexample} Let $s_1=(0,0,1,1)$, $s_2=(0,1,0,1)$, $s_3=(0,1,1,0)$, $s_4=(1,0,0,1)$, $s_5=(1,0,1,0)$ and $s_6=(1,1,0,0)$. This is the set from Example \ref{sectionfirstgap}. Our particular numbering of points is chosen to mesh well with our system of covering triples $T_i$ and $\overline{T}_i$.

Vector $u_1$ comes from triple $123$ and therefore is a unit vector perpendicular to $s_1$, $s_2$, and $s_3$ and we may chose choose $u_1=e_1$, the first standard basis vector. Similarly $u_2$ comes from $145$ and is normal to $s_1$, $s_4$ and $s_5$. We may choose $u_2=e_2$. In the same way $u_3=e_3$ and $u_4=e_4$.

For the vectors $v_i$, $v_1$ comes from $456$ and we may choose $v_1=\frac{1}{2}(1,-1,-1,-1)$. In the same way $v_2=\frac{1}{2}(-1,1,-1,-1)$, $v_3=\frac{1}{2}(-1,-1,1,-1)$ and $v_4=\frac{1}{2}(-1,-1,-1,1)$

The extra form $R=\ip{x}{u_1}\ip{x}{u_2}\ip{x}{u_3}\ip{x}{u_4}$ becomes $$R=\ip{x}{e_1}\ip{x}{e_2}\ip{x}{e_3}\ip{x}{e_4}=x_1x_2x_3x_4,$$
as promised in the Example $\ref{sectionfirstgap}$.

We note that the sets $\{u_i\}$ and $\{v_i\}$ form 2 orthogonal bases in $\mathbb{R}^4$ and therefore are self-dual. It follows that $u_i^*=u_i$ and $v_i^*=v_i$. The extra constraint from Equation \eqref{equationextraconstraint} becomes: $\frac{1}{4}Q(u_1)=\frac{1}{4}Q(v_1)$ or after rewriting and using homogeneity of $Q$:
$$16Q(1,0,0,0)=Q(1,-1,-1,-1),$$
again as claimed in Example \ref{sectionfirstgap}.

\subsection{Proofs.} The vectors $u_i$ and $v_i$ are not just two arbitrary sets of bases of $\mathbb{R}^4$. Since they come from a configuration of 6 points in general position they have some structure. The following simple lemma will be crucial to our proofs.

\begin{lemma}\label{inprod}
For all i,j the following hold:
\begin{equation*}
\ip{u_i}{v_j^*}\neq 0 \hspace{2mm} \text{and} \hspace{2mm} \ip{v_i}{u_j^*}\neq 0.
\end{equation*}
\end{lemma}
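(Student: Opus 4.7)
The plan is to reformulate each nonvanishing inner product as a linear-independence statement about four vectors in $\mathbb{R}^4$, then verify this by a short combinatorial elimination argument that exploits the fact that each pair of triples $T_i, T_\ell$ meets in exactly one index.

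Fix $i$ and $j$; I will prove $\ip{u_i}{v_j^*} \neq 0$ (the other statement is symmetric). By definition $v_j^*$ is a nonzero vector in $L_{\overline{T}_\ell}$ for every $\ell \neq j$, and these three hyperplanes already intersect in a line (general linear position makes $\{v_\ell : \ell \neq j\}$ linearly independent). Hence $\ip{u_i}{v_j^*}=0$ would force $v_j^*$ to also lie in $L_{T_i}$, so the four vectors $\{u_i\}\cup\{v_\ell : \ell\neq j\}$ would fail to be linearly independent in $\mathbb{R}^4$. Conversely, if those four vectors are independent, then $\ip{u_i}{v_j^*}\neq 0$. So it suffices to show linear independence of $u_i$ and $v_\ell$ for $\ell\neq j$.

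Assume $\alpha u_i+\sum_{\ell\neq j}\beta_\ell v_\ell=0$. For each fixed $\ell^*\neq j$ I choose a point $s_{k}\in S$ such that pairing the relation with $s_k$ eliminates every term except $\beta_{\ell^*}$. Using that $s_k\cdot u_i=0$ iff $k\in T_i$ and $s_k\cdot v_\ell=0$ iff $k\notin T_\ell$, I need $k\in T_i$, $k\in T_{\ell^*}$, and $k\notin T_m$ for the remaining two indices. Since every pair of triples $T_a, T_b$ meets in exactly one element and every index lies in exactly two triples, I can take $k$ to be the unique point of $T_i\cap T_{\ell^*}$ when $i\neq\ell^*$, and the unique point of $T_i\cap T_j$ when $i=\ell^*$; in either case the two triples containing $k$ are precisely the ones we need. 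Pairing with $s_k$ collapses the relation to $\beta_{\ell^*}(s_k\cdot v_{\ell^*})=0$.

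Next I must verify $s_k\cdot v_{\ell^*}\neq 0$, which is equivalent to $s_k\notin L_{\overline{T}_{\ell^*}}=\operatorname{span}\{s_m : m\in\overline{T}_{\ell^*}\}$. By construction $k\in T_{\ell^*}$, so $k\notin\overline{T}_{\ell^*}$, and the four points $s_k$ and $\{s_m : m\in\overline{T}_{\ell^*}\}$ have distinct indices; general linear position makes them linearly independent, so $s_k$ is not in the span of the other three. Therefore $\beta_{\ell^*}=0$ for each $\ell^*\neq j$, leaving $\alpha u_i=0$, hence $\alpha=0$. This proves the required linear independence, and hence $\ip{u_i}{v_j^*}\neq 0$. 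Swapping the roles of the $T$'s and $\overline{T}$'s (or equivalently of $u$ and $v$) gives $\ip{v_i}{u_j^*}\neq 0$ by the same argument.

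The only real obstacle is the combinatorial bookkeeping in selecting the right $s_k$ for each $\ell^*$; once one notes that the intersection pattern of the covering triples puts every point of $S$ in correspondence with a unique pair $(T_a,T_b)$, the elimination is automatic, and general linear position supplies all the needed nondegeneracies.
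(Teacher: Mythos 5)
Your reduction of $\ip{u_i}{v_j^*}\neq 0$ to the linear independence of $\{u_i\}\cup\{v_\ell:\ell\neq j\}$ is correct, and the elimination-by-pairing strategy is exactly the one the paper uses (the paper phrases it dually: assuming $\ip{u_1^*}{v_1}=0$ forces $v_1$ into the span of $u_2,u_3,u_4$, and it then pairs with points of $S$). But the key step of your elimination fails as stated when $i\neq j$. In that case the index $i$ itself occurs among the summation indices $\ell\neq j$, so the term $\beta_i v_i$ is present in the relation. For $\ell^*\notin\{i,j\}$ you take $k$ to be the unique point of $T_i\cap T_{\ell^*}$; then the $u_i$ term dies and the $v_{\ell^*}$ term survives, but the $v_i$ term survives as well, since $k\in T_i$ means $k\notin\overline{T}_i$, hence $\ip{s_k}{v_i}\neq 0$ by general position. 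Your two requirements ``$k\in T_i$'' (to kill $\alpha u_i$) and ``$k\notin T_i$'' (to kill $\beta_i v_i$) are contradictory, so no single point of $S$ isolates $\beta_{\ell^*}$: pairing with $s_k$ collapses the relation to $\beta_i\ip{s_k}{v_i}+\beta_{\ell^*}\ip{s_k}{v_{\ell^*}}=0$, not to $\beta_{\ell^*}\ip{s_k}{v_{\ell^*}}=0$ as you claim.

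The repair is small: order the eliminations. First take $\ell^*=i$ with $k\in T_i\cap T_j$, which (as you correctly verify) gives $\beta_i=0$; only then run the cases $\ell^*\notin\{i,j\}$, where the surviving two-term identity reduces to $\beta_{\ell^*}\ip{s_k}{v_{\ell^*}}=0$ because $\beta_i$ is already known to vanish. This sequential reuse is precisely what the paper does to handle the same interference in its proof that $\ip{u_1^*}{v_2}\neq 0$: it first extracts $\alpha_2=0$ from $s_6$ and then uses that fact when pairing with $s_2$ to get $\alpha_4=0$. With that ordering your argument is complete; the case $i=j$ is already airtight as written.
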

\begin{proof}
By symmetry it will suffice to prove only one of the two assertions. Also, by symmetry it will suffice to show that $\ip{u_1^*}{v_1}\neq 0$ and $\ip{u_1^*}{v_2}\neq 0$.

Let's suppose that $\ip{u_1^*}{v_1}=0$ then it follows that $v_1$ is in the span of $u_2,u_3,u_4$. Let $$v_1=\alpha_2u_2+\alpha_3u_3+\alpha_4u_4.$$ Now lets consider the inner product $\ip{v_1}{s_4}$. Recall that $v_1$ came from the triple 456, $u_2$ from 145, $u_3$ from 246 and $u_4$ from 356. It follows that $$\ip{v_1}{s_4}=0=\alpha_4\ip{s_4}{u_4}.$$

Since the points $s_i$ are in general position it follows that $\ip{s_4}{u_4}\neq 0$ and therefore $\alpha_4=0$. By considering inner products of $v_1$ with $s_5$ and $s_6$ we can also show that $\alpha_2=\alpha_3=0$ which yields a contradiction.

Similarly, if $\ip{u_1^*}{v_2}=0$ then $v_2$ is in the span of $u_2,u_3,u_4$. Let $$v_2=\alpha_2u_2+\alpha_3u_3+\alpha_4u_4.$$ Recall that $v_2$ came from the triple 236, $u_2$ from 145, $u_3$ from 246 and $u_4$ from 356. By the same argument we can establish that $\alpha_2=0$ by using inner products with $s_6$. Then we use inner product with $s_2$ to show that $\alpha_4=0$ and we will arrive at a contradiction.

\end{proof}

\begin{lemma}
Let $Q_i(x)=\ip{x}{u_i}\ip{x}{v_i}$ for $i=1 \dots 4$. The forms $Q_i$ form a basis of $I_{1,2}(S)$. Furthermore the pairwise products $Q_iQ_j$ with $i \leq j$ form a basis of $I^{[2]}_4(S)$ and the dimension of $I^{[2]}_4(S)$ is 10.
\end{lemma}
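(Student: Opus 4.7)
The plan is to verify in turn that the $Q_i$ lie in $I_{1,2}(S)$, that they form a basis of $I_{1,2}(S)$, and that their pairwise products are linearly independent in $P_{4,4}$. The first claim is immediate: since $T_i$ and $\overline{T}_i$ partition $\{1,\ldots,6\}$, every $s_k\in S$ lies in either $L_{T_i}$ or $L_{\overline{T}_i}$, so one of the two linear factors of $Q_i(x)=\ip{x}{u_i}\ip{x}{v_i}$ kills it. Linear independence of $Q_1,\ldots,Q_4$ would follow by evaluating at the dual vectors: $Q_i(u_k^*)=\delta_{ik}\ip{u_k^*}{v_k}$, and Lemma~\ref{inprod} guarantees the diagonal entries $\ip{u_k^*}{v_k}$ are nonzero, so the evaluation matrix is diagonal and nonsingular. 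To see that the $Q_i$ span, I would invoke general position to produce, for each point $s_k$, a reducible quadric vanishing on the other five points but not on $s_k$ (take a hyperplane through three of them times a hyperplane through the other two that avoids $s_k$); this shows the six evaluation functionals on $P_{4,2}$ are independent, hence $\dim I_{1,2}(S)=\binom{5}{2}-6=4$ and the $Q_i$ are a basis.

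Next, I would observe that the pairwise products $Q_iQ_j$ generate $I_4^{[2]}(S)$: each $Q_iQ_j$ lies in $I_4^{[2]}(S)$ by the polarization identity $2Q_iQ_j=(Q_i+Q_j)^2-Q_i^2-Q_j^2$, and, conversely, since the $Q_i$ form a basis of $I_{1,2}(S)$, every square $q^2$ with $q\in I_{1,2}(S)$ expands into a linear combination of the $Q_iQ_j$, so these ten products span $I_4^{[2]}(S)$.

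The main work is proving these ten products are linearly independent. Given $\sum_{i\le j}c_{ij}Q_iQ_j\equiv 0$, I would first evaluate at $u_k^*$: by the diagonal formula above, every cross term dies and the identity reduces to $c_{kk}\ip{u_k^*}{v_k}^2=0$, forcing $c_{kk}=0$ for every $k$. To isolate the off-diagonal coefficient $c_{kl}$ with $k<l$, I would plug in the one-parameter vector $w(t)=u_k^*+tu_l^*$. For $i\notin\{k,l\}$ one has $\ip{w(t)}{u_i}=0$, hence $Q_i(w(t))=0$, so after discarding the vanishing $c_{kk},c_{ll}$ terms the identity collapses to $c_{kl}\cdot Q_k(w(t))Q_l(w(t))\equiv 0$ in $t$. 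A short computation shows $Q_k(w(t))Q_l(w(t))$ is a cubic polynomial in $t$ whose coefficient of $t$ equals $\ip{u_k^*}{v_k}\ip{u_k^*}{v_l}$, which is nonzero by Lemma~\ref{inprod}; so the polynomial is not identically zero and $c_{kl}=0$. This yields $\dim I_4^{[2]}(S)=10$ and completes the lemma.

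The main obstacle is this last step: one must choose test vectors that pick off each coefficient $c_{ij}$ one at a time. The one-parameter family $u_k^*+tu_l^*$ (rather than a single vector like $u_k^*+u_l^*$) is precisely what lets Lemma~\ref{inprod} guarantee a nonzero leading coefficient without any assumption on signs or cancellations among sums of inner products.
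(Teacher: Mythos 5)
Your proposal is correct, and its skeleton coincides with the paper's: both arguments hinge on evaluating at the dual vectors $u_k^*$, using Lemma~\ref{inprod} to see that $Q_i(u_k^*)=\delta_{ik}\ip{u_k^*}{v_k}$ with nonzero diagonal, which gives independence of the $Q_i$ and then isolates the squares $Q_k^2$ among the pairwise products. You also supply two details the paper only asserts: that $\dim I_{1,2}(S)=4$ (via reducible quadrics separating the six evaluation functionals) and that the products $Q_iQ_j$ actually span $I_4^{[2]}(S)$ (via polarization).

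Where you genuinely diverge is the independence of the off-diagonal products $Q_iQ_j$, $i<j$. The paper argues by order of vanishing at $u_i^*$: only the products containing the index $i$ vanish there to order exactly $1$ (the rest vanish to order $2$), so a dependence forces a dependence among the $Q_iQ_j$ with $i$ fixed, and factoring out $Q_i$ reduces to the already-established independence of the $Q_j$. You instead restrict the putative relation to the line $w(t)=u_k^*+tu_l^*$, observe that all terms except $c_{kl}Q_kQ_l$ die (using $c_{kk}=c_{ll}=0$ from the pointwise evaluation), and read off the coefficient of $t$ in the resulting cubic, which Lemma~\ref{inprod} makes nonzero. The two mechanisms are logically equivalent --- your $t$-coefficient is exactly the "order-one part at $u_k^*$ in the direction $u_l^*$" that the paper is implicitly extracting --- but your version is more explicit and self-contained, since it avoids the slightly delicate bookkeeping of first-order Taylor data at $u_i^*$ and the "factor out $Q_i$" step, at the cost of one short computation per pair $(k,l)$. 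Either route is a complete proof.
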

\begin{proof}
It is not hard to show that $I_{1,2}(S)$ has dimension 4. Therefore it suffices to show that the polynomials $Q_i$ are linearly independent. Consider the values of $Q_i$ at the points $u_i^*$.

From the definition of the dual points $u_i^*$ and Lemma \ref{inprod} it follows that $Q_i(u_i^*)=\ip{u_i^*}{v_i} \neq 0$ and $Q_i(u_j^*)= 0$ when $i \neq j$. Therefore, if $P=\alpha_1Q_1+\alpha_2Q_2+\alpha_3Q_3+\alpha_4Q_4=0$ then by considering $P(u_i^*)$ we can see that $\alpha_i$ is 0 for each $i$ and therefore $Q_i$ are linearly independent.

Now lets consider pairwise products $Q_iQ_j$ for $i \leq j$. These forms are clearly in $I^{[2]}_4(S)$, and we need to show their linear independence. Of all the pairwise products only $Q^2_i$ does not vanish on $u^*_i$. Therefore the squares $Q_i^2$ are linearly independent from all other pairwise products and we only need to show linear independence of $Q_iQ_j$ for $i < j$.

By Lemma \ref{inprod} only the products $Q_iQ_j$ vanish on $u^*_i$ to order 1. If both indices are distinct from $i$ then the product vanishes to order 2. Therefore if forms $Q_iQ_j$ are linearly dependent it follows that the forms $Q_iQ_j$ for some fixed $i$ are linearly dependent. We can factor out $Q_i$ and it follows that the forms $Q_j$ are linearly dependent. This is a contradiction.

Since pairwise products $Q_iQ_j$ span $I^{[2]}_4(S)$ and are linearly independent it follows $\dim I^{[2]}_4(S)=10$.

\end{proof}

We are now ready to show 2-independence of $S$.

\begin{prop}
Let $S$ be a set of 6 points in $R^4$ in general linear position. Then $S$ is 2-independent.
\end{prop}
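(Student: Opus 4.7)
To show $S$ is 2-independent, I need to verify the two conditions of Definition \ref{defindindep} with $d=2$: that the basis $Q_1,\dots,Q_4$ of $I_{1,2}(S)$ from the preceding lemma has no common zero outside of $S$, and that for each $s\in S$ the subspace of $I_{1,2}(S)$ consisting of forms that vanish to order 2 at $s$ has codimension $n-1=3$ in $I_{1,2}(S)$. The second condition, via the discussion preceding Theorem \ref{gapzz}, is equivalent to showing that the four gradients $\nabla Q_i(s)$ span the $3$-dimensional subspace $s^{\perp}$.

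For condition (1), I would argue by case analysis on how a common zero $y$ forces each $Q_i=\ip{x}{u_i}\ip{x}{v_i}$ to vanish. For each $i$, $y$ must be perpendicular to $u_i$ or to $v_i$; record the choices as a subset $A\subseteq\{1,2,3,4\}$ with $i\in A$ when $\ip{y}{u_i}=0$. There are sixteen cases, which split by $|A|$. When $|A|\in\{0,4\}$ the relevant four vectors are $\{v_i\}$ or $\{u_i\}$, each forming a basis of $\mathbb{R}^4$ because of the general linear position of the hyperplanes $L_{T_i}$ (respectively $L_{\overline{T}_i}$), forcing $y=0$. When $|A|=1$, say $A=\{1\}$, I invoke Lemma \ref{inprod}: since $\ip{u_1}{v_1^*}\neq 0$ and $v_1^*$ is perpendicular to $v_2,v_3,v_4$, the vector $u_1$ is not in $\operatorname{span}(v_2,v_3,v_4)$, so $\{u_1,v_2,v_3,v_4\}$ is a basis and $y=0$; the case $|A|=3$ is symmetric. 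Finally, when $|A|=2$, the combinatorial structure of the covering (each index $j\in\{1,\dots,6\}$ lies in exactly two triples $T_i$) yields a bijection $j\mapsto A_j:=\{i:j\in T_i\}$ between the six points of $S$ and the six 2-subsets of $\{1,2,3,4\}$, and I will show that the four chosen vectors span exactly a 3-dimensional subspace with common perpendicular $\mathbb{R}\cdot s_j$.

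For condition (2), I compute $\nabla Q_i(s_j)=\ip{s_j}{u_i}v_i+\ip{s_j}{v_i}u_i$. If $j\in T_i$ then $\ip{s_j}{u_i}=0$, and general linear position (no four of the six points linearly dependent) gives $\ip{s_j}{v_i}\neq 0$, so $\nabla Q_i(s_j)$ is a nonzero multiple of $u_i$; symmetrically for $j\in\overline{T}_i$ it is a nonzero multiple of $v_i$. Writing the partition $\{1,2,3,4\}=\{i_1,i_2\}\sqcup\{i_3,i_4\}$ corresponding to $A_j$, the four gradients are nonzero multiples of $u_{i_1},u_{i_2},v_{i_3},v_{i_4}$, all lying in the $3$-dimensional space $s_j^{\perp}$. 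It suffices to exhibit three linearly independent ones: I take $u_{i_1},v_{i_3},v_{i_4}$, observe that $\operatorname{span}(v_{i_3},v_{i_4})^{\perp}=\operatorname{span}(v_{i_1}^*,v_{i_2}^*)$, and apply Lemma \ref{inprod} to conclude $\ip{u_{i_1}}{v_{i_1}^*}\neq 0$, hence $u_{i_1}\notin\operatorname{span}(v_{i_3},v_{i_4})$. This also simultaneously resolves the remaining $|A|=2$ subcase in condition (1): the span is at least $3$-dimensional, so its common perpendicular is a line, which by construction contains $s_j$.

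The main obstacle is pinning down the precise non-degeneracies needed from the phrase ``general linear position,'' namely that $\{u_i\}$ and $\{v_i\}$ are bases, that $\ip{s_j}{v_i}\neq 0$ whenever $j\in T_i$, and that the $|A|=2$ configurations have exactly $3$-dimensional span. Lemma \ref{inprod} already bundles most of this work, reducing the remaining verifications to checking that any four of the six points of $S$ are linearly independent, which is routine from the hypothesis.
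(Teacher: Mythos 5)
Your proposal is correct and follows essentially the same route as the paper: the factoring basis $Q_i=\ip{x}{u_i}\ip{x}{v_i}$, a case analysis on which factor of each $Q_i$ vanishes (with Lemma \ref{inprod} closing the degenerate cases), and the computation showing the gradients $\nabla Q_i(s_j)$ are nonzero multiples of $u_{i_1},u_{i_2},v_{i_3},v_{i_4}$ spanning $s_j^{\perp}$. The only cosmetic difference is that in the $|A|=2$ case you argue via the span of the four normal vectors rather than, as the paper does, via the intersection of the corresponding hyperplanes $L_T$; these are dual formulations of the same fact.
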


\begin{proof}
We first show that $S$ forces no additional zeroes on quadratic forms. Recall that $Q_i=\ip{x}{u_i}\ip{x}{v_i}$ and the forms $Q_i$ form a basis of $I_{1,2}(S)$. It will suffice to show that the forms $Q_i$ have no common zeroes outside of $S$.

Let $z$ be a nonzero point in the intersection $\cap_{i=1}^4 Z(Q_i).$ It follows that for each $i$ we either have $\ip{z}{u_i}=0$ or $\ip{z}{v_i}=0$. Since $u_i$ and $v_i$ form a basis of $\mathbb{R}^4$ the vector $z$ cannot be orthogonal to all four $u_i$ or $v_i$. If $\ip{z}{u_i}=0$ for three indices $i$, which we may assume without loss of generality to be 1,2 and 3, then it follows that $z$ is a multiple of $u_4^*$. But then $\ip{z}{u_4} \neq 0$  and from Lemma \ref{inprod} we know that $\ip{z}{v_4} \neq 0$. Therefore $Q_4(z) \neq 0$, which is a contradiction.

Therefore it must happen that $z$ is orthogonal to two $u_i$ and two $v_i$. Again without loss of generality we may assume that $z$ is orthogonal to $u_1$, $u_2$, $v_3$ and $v_4$. Since $u_1$ comes from the triple $123$, $u_2$ comes from $123$, $v_3$ comes from $135$ and $124$ it follows that $z$ is in the intersection of spans of $\{s_1, s_2, s_3\}$, $\{s_1, s_4, s_5\}$, $\{s_1,s_3,s_5\}$ and $\{s_1,s_2,s_4\}$. Since the points $s_i$ are in general linear position it follows that $s_1$ spans this intersection. The other points $s_i$ arise in the same manner from choosing different pairs of $u_i$'s and $v_i$'s.

For the second condition of $2$-independence we need to show that for any $s_i \in S$ there exists a unique (up to a constant multiple) form in $I_{1,2}(S)$ that is singular at $s_i$. Again by symmetry we only need to prove this for $s_1$. By construction $s_1$ is orthogonal to $u_1$, $u_2$, $v_3$ and $v_4$. Therefore it follows that
$\nabla Q_1(s_1)=\ip{v_1}{s_1}u_1$, $\nabla Q_2(s_1)=\ip{v_2}{s_1}u_2$, $\nabla Q_3(s)=\ip{u_3}{s_1}v_3$ and $\nabla Q_4(s)=\ip{u_4}{s_1}v_4$. The coefficients of vectors $u_1$,$u_2$, $v_3$ and $v_4$ are nonzero and since $s_i$ are in general linear position it follows that $u_1$,$u_2$, $v_3$ and $v_4$ span the vector space $s_1^{\perp}.$ Therefore there is only one (up to a constant multiple) linear combination of gradients of $Q_i$ that vanishes at $s_1$.
\end{proof}

Now we show that the fourth degree form
$$R=\ip{x}{u_1}\ip{x}{u_2}\ip{x}{u_3}\ip{x}{u_4}$$ is not in the span of squares from $I_{1,2}(S)$.

\begin{prop}
Let $R=\ip{x}{u_1}\ip{x}{u_2}\ip{x}{u_3}\ip{x}{u_4}$. The $R$ is not in $I^{[2]}_4(S)$.
\end{prop}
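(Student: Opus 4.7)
The plan is to prove the proposition by exhibiting a linear functional on $P_{4,4}$ that vanishes on every element of $I^{[2]}_4(S)$ but is nonzero on $R$. The natural choice, motivated by the discussion preceding the proposition, is the functional
$$Q\longmapsto \ip{v_k^*}{u_k}^2 Q(u_k^*)-\ip{u_k^*}{v_k}^2 Q(v_k^*)$$
for some (any) fixed $k$. I will first verify this is identically zero on $I^{[2]}_4(S)$, and then compute its value on $R$.

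First I would evaluate $R$ at the two test points. At $u_k^*$ we have $\ip{u_k^*}{u_i}=\delta_{ki}$ by the defining duality, so one of the four linear factors of $R$ vanishes and $R(u_k^*)=0$ for every $k$. At $v_k^*$ we have $R(v_k^*)=\prod_{i=1}^4\ip{v_k^*}{u_i}$, and by Lemma \ref{inprod} every one of these inner products is nonzero, so $R(v_k^*)\neq 0$. Combined with the fact that $\ip{u_k^*}{v_k}\neq 0$ (again Lemma \ref{inprod}), this shows that the functional above takes the nonzero value $-\ip{u_k^*}{v_k}^2 R(v_k^*)$ on $R$.

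Next I would check that the functional annihilates $I^{[2]}_4(S)$. Since the previous lemma shows that the pairwise products $Q_iQ_j$ with $i\le j$ form a basis of $I^{[2]}_4(S)$, it is enough to verify the identity $\ip{v_k^*}{u_k}^2 (Q_iQ_j)(u_k^*)=\ip{u_k^*}{v_k}^2(Q_iQ_j)(v_k^*)$ on each basis element. Using $Q_i(u_k^*)=\ip{u_k^*}{u_i}\ip{u_k^*}{v_i}=\delta_{ki}\ip{u_k^*}{v_i}$, the only basis product which is nonzero at $u_k^*$ is $Q_k^2$, taking the value $\ip{u_k^*}{v_k}^2$. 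By the symmetric computation at $v_k^*$, only $Q_k^2$ is nonzero there, with value $\ip{v_k^*}{u_k}^2$. Substituting these into the two sides of the proposed identity yields $\ip{v_k^*}{u_k}^2\ip{u_k^*}{v_k}^2$ on both sides, so the identity holds on each basis element and therefore throughout $I^{[2]}_4(S)$.

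Combining the two pieces, if $R$ were in $I^{[2]}_4(S)$, then the functional would vanish on $R$; but we have shown it equals $-\ip{u_k^*}{v_k}^2 R(v_k^*)\neq 0$, a contradiction. The main obstacle is purely bookkeeping, namely keeping the roles of $u_i,v_i,u_i^*,v_i^*$ straight in the evaluations at the dual points $u_k^*$ and $v_k^*$; once the basis of $I^{[2]}_4(S)$ from the previous lemma is in hand and Lemma \ref{inprod} is invoked for the nonvanishing inner products, the argument is a short computation.
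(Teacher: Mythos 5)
Your proof is correct and follows essentially the same route as the paper: it uses the same basis $Q_iQ_j$ ($i\le j$) of $I^{[2]}_4(S)$, the same test points $u_k^*$ and $v_k^*$, and the same appeal to Lemma \ref{inprod} for the nonvanishing inner products. The only difference is packaging — you fold the two evaluations into the single separating functional $\ip{v_k^*}{u_k}^2 Q(u_k^*)-\ip{u_k^*}{v_k}^2 Q(v_k^*)$, which is exactly the extra linear constraint the paper establishes in its subsequent proposition, whereas the paper first eliminates the squares $Q_i^2$ via all four points $u_i^*$ and then derives a contradiction at $v_k^*$.
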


\begin{proof}
We know that products $Q_iQ_j$ with $i \leq j$ form a basis of $I^{[2]}_4(S)$. We observe that $R(u_i^*)=0$ for all $i$, and the only form from the spanning set that doesn't vanish at $u_i^*$ is $Q_i^2$. Therefore, if we assume that $R$ is spanned by $Q_iQ_j$ then $R$ is spanned by products $Q_iQ_j$ with $i$ not equal to $j$.

Now lets look at $R(v_k^*)$. By Lemma \ref{inprod} we know that $R(v_k^*) \neq 0$. However, $Q_iQ_j(v_k^*)=0$ since $\ip{v_i^*}{v_k}=0$ for $i \neq k$. Therefore we arrive at a contradiction.
\end{proof}

Now we focus on an explicit linear constraint on forms in $I^{[2]}_4(S)$ that is independent of vanishing gradients on the points $s_i$.
To establish the constraint it is enough to look at forms in $I^{[2]}_4(S)$ that are squares, since they span $I^{[2]}_4(S)$.

\begin{prop}
Let $Q=\alpha_1Q_1+\alpha_2Q_2+\alpha_3Q_3+\alpha_4Q_4$. Then $Q$ satisfies
$$\ip{v_i^*}{u_i}Q(u_i^*)=\ip{u_i^*}{v_i}Q(v_i^*)$$
for all $i$. It follows that $Q^2$ satisfies:
$$\ip{v_i^*}{u_i}^2Q^2(u_i^*)=\ip{u_i^*}{v_i}^2Q^2(v_i^*)$$
On the other hand the form $R$ will not satisfy any of these constraints.
\end{prop}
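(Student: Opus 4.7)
The plan is to evaluate both sides of each proposed identity directly at the dual points $u_i^*$ and $v_i^*$, exploiting the duality relations $\ip{u_i^*}{u_j}=\delta_{ij}$ and $\ip{v_i^*}{v_j}=\delta_{ij}$ built into the definitions. Since $Q_j(x)=\ip{x}{u_j}\ip{x}{v_j}$, the first factor forces $Q_j(u_i^*)=\delta_{ij}\ip{u_i^*}{v_i}$, so only the $j=i$ term survives and $Q(u_i^*)=\alpha_i\ip{u_i^*}{v_i}$. By the symmetric computation on the $v_i^*$ side, $Q(v_i^*)=\alpha_i\ip{v_i^*}{u_i}$. Substituting both expressions into the claimed identity reduces it to $\alpha_i\ip{v_i^*}{u_i}\ip{u_i^*}{v_i}=\alpha_i\ip{u_i^*}{v_i}\ip{v_i^*}{u_i}$, which is a tautology. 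This gives the linear identity for $Q$.

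To extend to $Q^2$, I would simply square: $Q^2(u_i^*)=\alpha_i^2\ip{u_i^*}{v_i}^2$ and $Q^2(v_i^*)=\alpha_i^2\ip{v_i^*}{u_i}^2$, whence $\ip{v_i^*}{u_i}^2Q^2(u_i^*)=\alpha_i^2\ip{v_i^*}{u_i}^2\ip{u_i^*}{v_i}^2=\ip{u_i^*}{v_i}^2Q^2(v_i^*)$. Because the squares $Q^2$ with $Q\in I_{1,2}(S)$ span $I^{[2]}_4(S)$ and the asserted relation is linear in the degree-$4$ form, the constraint then extends to every element of $I^{[2]}_4(S)$, as desired.

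For the failure on $R=\ip{x}{u_1}\ip{x}{u_2}\ip{x}{u_3}\ip{x}{u_4}$, the same duality gives $R(u_i^*)=\prod_{j}\ip{u_i^*}{u_j}=0$ because three of the four factors vanish; while $R(v_i^*)=\prod_{j}\ip{v_i^*}{u_j}$, each of whose factors is nonzero by Lemma \ref{inprod}. Hence the constraint $\ip{v_i^*}{u_i}^2R(u_i^*)=\ip{u_i^*}{v_i}^2R(v_i^*)$ collapses to $0=\ip{u_i^*}{v_i}^2R(v_i^*)$, and both $\ip{u_i^*}{v_i}$ and $R(v_i^*)$ are nonzero again by Lemma \ref{inprod}, producing the contradiction. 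There is no substantive obstacle here: the whole proof is bookkeeping that depends only on (a) the defining duality relations of $u_i^*$, $v_i^*$ and (b) the generic nonvanishing of cross inner products supplied by Lemma \ref{inprod}; once those two facts are invoked, the identity and the failure for $R$ both fall out of one direct evaluation.
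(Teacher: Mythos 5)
Your proposal is correct and follows essentially the same route as the paper: evaluate $Q$ at the dual points $u_i^*$, $v_i^*$ to get $Q(u_i^*)=\alpha_i\ip{u_i^*}{v_i}$ and $Q(v_i^*)=\alpha_i\ip{v_i^*}{u_i}$, square to get the relation for $Q^2$, and then use $R(u_i^*)=0$ together with Lemma \ref{inprod} to show $R$ violates each constraint. The only addition is your explicit remark that the relation extends by linearity from squares to all of $I^{[2]}_4(S)$, which the paper handles in the surrounding discussion rather than in the proof itself.
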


\begin{proof} For any $i$,
$$Q(u_i^*)=\alpha_iQ_i(u_i^*)=\alpha_i\ip{u_i^*}{v_i},$$ and $$Q(v_i^*)=\alpha_iQ_i(v_i^*)=\alpha_i\ip{v_i^*}{u_i}.$$ Therefore $$\ip{v_i^*}{u_i}Q(u_i^*)=\ip{u_i^*}{v_i}Q(v_i^*).$$
The constraint for $Q^2$ now follows.

On the other hand by definition of $u_i^*$ and Lemma \ref{inprod} we know that $$R(u^*_i)=0 \hspace{.7cm} \text{while} \hspace{.7cm} R(v_i^*)\neq 0 \hspace{.7cm} \text{and} \hspace{.7cm}\ip{u_i^*}{v_i}\neq 0.$$ Therefore $R$ does not satisfy the relation for any $i$ and any one of these relations is independent of gradient vanishing at points $s_i$.
\end{proof}

\section{Inequalities in the General Setting}\label{sectiongenineq}

Let $S$ be a finite set of points in $\rn$ and suppose that the face $\p(S)$ of the cone of nonnegative forms has a higher dimension than the face $\sq(S)$ of the cone of sums of squares. Then it follows that there must be extra linear constraints that are satisfied by sums of squares in $\sq(S)$, but are not satisfied by the nonnegative forms in $\p(S)$. Since $\sq(S)$ spans the vector space $I^{[2]}_{2d}(S)$ the extra constraints hold for any polynomial that is spanned by squares.

Let $l_1,\ldots,l_k$ be linear functionals on $P_{n,2d}$ that form a basis of the set of extra constraints. For any $p \in \sq$ we know that

\begin{equation}\label{forcedcontraints}
\text{if} \hspace{2mm} p(s)=0 \hspace{2mm} \text{for all} \hspace{2mm} s \in S\hspace{2mm} \text{then} \hspace{2mm} l_1(p)=\ldots=l_k(p)=0.
\end{equation}
Let $R_S$ be a quadratic functional on $P_{n,2d}$ given by the sum of squares of linear functionals $l_i$:
$$R_S(p)=\sum_{i=1}^k l_i^2(p).$$

\noindent Let $M_S$ be the linear functional on $P_{n,2d}$ given by summing the values of a form $p$ on the points $s \in S$:

$$M_S(p)=\sum_{s \in S} p(s).$$

\noindent Finally let $T(p)$ be the linear functional given by averaging a form $p$ over the unit sphere:

$$T(p)=\int_{\sph}p \hspace{.7mm} d\sigma,$$

\noindent where $\sigma$ is the uniform probability measure on $\sph$.

We claim that there exists $\alpha >0$ such that for all $p \in \sq$
\begin{equation} \label{genineq}
\alpha M_S(p)T(p)-R_S(p) \geq 0.
\end{equation}

We briefly explain why there exists $\alpha$ that makes inequality \eqref{genineq} hold for all $p \in \sq(S)$. First we observe that we can restrict ourselves to the case of forms of average 1 on the unit sphere, i.e. the case of $T(p)=1$.

The linear functional $M_S$ is clearly nonnegative on $\sq$ 
. When $M_S(p)=0$ we know by \eqref{forcedcontraints} all the functionals $l_i(p)$ must vanish and therefore $R_S(p)=0$. Since $M_S(p)>0$ for all $p \in \sq$ that are not in $\sq(S)$ it follows that the only obstacle to finding an appropriate $\alpha$ has to come from looking infinitesimally close to the face $\sq(S)$ where $M_S(p)=0$. Owing to the quadratic nature of the cone of sums of squares $\sq$ we will be able to argue the existence of $\alpha$ and we will provide an explicit example of such inequality in Section \ref{sectionexplicitineq}.

We observe that regardless of $\alpha$ the inequality \eqref{genineq} will not hold for some $f \in \p$. Let $f$ be a form such that $f$ is in $\p(S)$ but $f$ is not in the vector space $I^{[2]}_{2d}(S)$ spanned by $\sq(S)$. We have a dimensional gap between $\p(S)$ and $\sq(S)$  and thus we know that such $f$ exist. Since $f \in \p(S)$ it vanishes at every point of $S$ and therefore $M_S(p)=0$. Also, since $f$ is not in $I^{[2]}_{2d}(S)$ it follows that at least one of the functionals $l_i$ is not zero on $f$ and therefore $R_S(f) >0$. We thus see that $M_S(f)T(f)-R_{S}(f) < 0$.

We note that the linear functional $T(p)$ is used to homogenize the inequality and we could have used any other linear functional that is strictly positive on all nonzero forms in $\sq$ instead.

We first need a preliminary lemma that is very similar in flavor to the Extension Lemma \ref{ext}.

\begin{lemma}\label{prelim}
Let $Q_1$ and $Q_2$ be two quadratic forms on a real vector space $V$, such that $Q_1$ is positive semidefinite and $Q_2$ is positive definite on $V$. Let $R$ be a sum of squares of quadratic forms $p_i$ on $V$: $$R=\sum_i p_i^2,$$
and suppose further that $R$ vanishes whenever $Q_1$ vanishes: $Z(Q_1) \subseteq Z(R)$. Then there exists $\alpha >0$ such that the form
$Q_{\alpha}=\alpha Q_1 Q_2 -R$ is nonnegative on $V$:
\begin{equation}
\label{qu}
Q_{\alpha}(v)=\alpha Q_1(v) Q_2(v) -R(v) \geq 0
\end{equation}
for all $v \in V$.
\end{lemma}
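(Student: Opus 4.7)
The plan is to obtain $\alpha$ via direct estimates, exploiting two structural facts: $Q_1$ is positive semidefinite, so its zero set is a linear subspace, and the quadratic forms $p_i$ are forced to vanish on this subspace. I would reduce the inequality \eqref{qu} to a pointwise comparison of two nonnegative quantities on $V$, both controlled by $\|u\|^2 \|v\|^2$ for an appropriate decomposition $v = w + u$.

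Since $Q_1$ is positive semidefinite, its zero set $W = Z(Q_1)$ is the kernel of the associated symmetric matrix and in particular a linear subspace of $V$. Fix any inner product on $V$ and decompose $V = W \oplus W^{\perp}$, writing $v = w + u$ uniquely with $w \in W$, $u \in W^{\perp}$. On $W^{\perp}$ the form $Q_1$ is positive definite, so there exists $\lambda > 0$ with $Q_1(v) = Q_1(u) \geq \lambda \|u\|^2$ for all $v \in V$. Since $Q_2$ is positive definite on all of $V$, there is $\mu > 0$ with $Q_2(v) \geq \mu \|v\|^2$. Combining these, $Q_1(v) Q_2(v) \geq \lambda \mu \|u\|^2 \|v\|^2$.

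Next I would estimate $R$. Because $R = \sum p_i^2$ vanishes on $W$, each $p_i$ vanishes on $W$. If $B_i$ denotes the symmetric bilinear form associated with $p_i$, polarization yields $2 B_i(w, w') = p_i(w + w') - p_i(w) - p_i(w') = 0$ for all $w, w' \in W$, so the matrix of $p_i$ has a zero $W \times W$ block and $p_i(v) = 2 B_i(w, u) + p_i(u)$. By Cauchy--Schwarz and boundedness of $B_i$ and $p_i$ on the unit ball, there is a constant $C_i$ with $|p_i(v)| \leq C_i \|v\| \|u\|$ for all $v \in V$; squaring and summing over $i$ gives $R(v) \leq K \|v\|^2 \|u\|^2$ for some $K > 0$.

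Finally, choosing any $\alpha \geq K/(\lambda \mu)$ gives
\begin{equation*}
\alpha Q_1(v) Q_2(v) - R(v) \;\geq\; (\alpha \lambda \mu - K)\, \|u\|^2 \|v\|^2 \;\geq\; 0
\end{equation*}
for every $v \in V$, as required. The only substantive step is the polarization argument that extracts the cross-free form $p_i(v) = 2 B_i(w, u) + p_i(u)$ from the hypothesis $Z(Q_1) \subseteq Z(R)$; once that is in hand everything reduces to matching the $\|u\|^2 \|v\|^2$ upper bound on $R$ with the $\|u\|^2 \|v\|^2$ lower bound on $Q_1 Q_2$, so I expect no serious obstacle.
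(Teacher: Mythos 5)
Your proof is correct, and it takes a genuinely different route from the paper's. The paper argues by a local--global compactness split: near the subspace $W=Z(Q_1)$ it compares the Hessians of $Q_1Q_2$ and of $R$ at points of $W$ (both forms vanish to order $2$ on $W$, and the Hessian of $Q_1Q_2$ is positive definite on $W^{\perp}$), obtaining nonnegativity of $\alpha_1Q_1Q_2-R$ in a $\delta$-neighborhood of $W$ on the unit sphere; away from $W$ it uses the strict positivity of $Q_1Q_2$ and compactness to find $\alpha_2$, and then takes $\alpha=\max(\alpha_1,\alpha_2)$. You instead prove two global pointwise bounds, $Q_1(v)Q_2(v)\geq\lambda\mu\,\|u\|^2\|v\|^2$ and $R(v)\leq K\,\|u\|^2\|v\|^2$ for the orthogonal decomposition $v=w+u$, which makes the comparison immediate and yields the explicit constant $\alpha=K/(\lambda\mu)$. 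The key step that replaces the paper's Hessian analysis is your polarization argument showing the $W\times W$ block of each $p_i$ vanishes, so that $p_i(v)=2B_i(w,u)+p_i(u)$ is controlled by $\|v\|\,\|u\|$; this is sound, since $R=\sum p_i^2$ vanishing on $W$ forces each $p_i$ to vanish on $W$, and $W$ is a subspace because $Q_1$ is positive semidefinite. Your argument is more elementary (no second-order Taylor analysis, no $\delta$-neighborhood uniformity to justify) and quantitatively sharper in that it produces a computable $\alpha$; the paper's version generalizes more readily to situations where the zero set is not a linear subspace but one still has roundness of the Hessians, which is the setting of its Lemma \ref{ext}. The only implicit hypotheses you rely on --- that $V$ is finite-dimensional and the sum defining $R$ is finite --- are also used in the paper's compactness argument and hold in the intended application, so there is no gap.
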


\begin{proof}
The inequality \eqref{qu} is homogeneous and therefore it suffices to prove it for all $v$ on the unit sphere $S_V$ of $V$. Since $Q_1$ is a positive semidefinite quadratic form we know that $Q_1$ vanishes on a subspace $W$ of $V$.  We note that both $Q_1Q_2$ and $R$ vanish to order 2 on $W$, since zero is a global minimum for both forms.

Let's pick a point $w \in W$ that is also on the unit sphere. The Hessian $H_{Q_1Q_2}(w)$ of $Q_1Q_2$ at $w$ has $W$ as the kernel and is positive definite on $W^{\perp}$. This follows from the fact that $Q_1$ is positive semidefinite and $Q_2$ is positive definite. The same is true for the Hessian $H_R(w)$ of $R$ at $w$ since $R$ also vanishes on $W$ and zero is a global minimum of $R$. Therefore by compactness of the unit sphere we can find $\alpha_1$ such that $\alpha_1H_{Q_1Q_2}(w)-H_R(w)$ is positive definite on $W^{\perp}$ for any $w\in W$ on the unit sphere. It follows that the form $Q_{\alpha_1}$ is nonnegative on all points of the unit sphere that are distance at most $\delta$ from $W$, for some $\delta >0$.

Let's consider points $v \in S_v$ that are at least $\delta$ away from the subspace $W$ on which $Q_1$ vanishes. We know that for these points $v$ we will have $Q_1(v)Q_2(v) \geq \epsilon$ for some $\epsilon >0$. Then we can multiply $Q_1Q_2$ by a sufficiently large $\alpha_2$ so that $Q_{\alpha_2}$ will be positive on the points $v$ that are $\delta$ away from $W$. We choose $\alpha=\max (\alpha_1, \alpha_2)$ and $Q_{\alpha}$ is nonnegative on the whole unit sphere $S_V$.

\end{proof}

Now we prove the existence of $\alpha$ that makes inequality \eqref{genineq} true.
\begin{theorem}\label{theoremineqgen}
There exists $\alpha >0$ such that for all $p \in \sq$
\begin{equation*}
\alpha M_S(p)T(p)-R_S(p) \geq 0.
\end{equation*}
\end{theorem}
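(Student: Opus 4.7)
The plan is to parametrize $\sq$ by tuples of squares and apply Lemma \ref{prelim} on a finite-dimensional vector space. Set $V=P_{n,d}$, let $N=\binom{n+d-1}{d}=\dim V$, and consider the quadratic map
$$\phi\colon V^N\to P_{n,2d},\qquad \phi(q_1,\ldots,q_N)=q_1^2+\cdots+q_N^2.$$
A standard Gram matrix argument (spectrally decomposing the Gram matrix of any sum-of-squares decomposition) shows that every $p\in\sq$ lies in the image of $\phi$. Hence it suffices to produce $\alpha>0$ such that the pullback $\alpha M_S(\phi(q))T(\phi(q))-R_S(\phi(q))$ is nonnegative on all of $V^N$; evaluating at any preimage of $p\in\sq$ then gives the theorem.

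Define quadratic forms on $V^N$:
$$Q_1(q)=M_S(\phi(q))=\sum_{s\in S}\sum_{j=1}^N q_j(s)^2,\qquad Q_2(q)=T(\phi(q))=\sum_{j=1}^N \int_{\sph}q_j^2\,d\sigma.$$
Plainly $Q_1$ is positive semidefinite and $Q_2$ is positive definite. For the third ingredient, observe that although $R_S$ is quadratic in its argument in $P_{n,2d}$, each individual term $l_i(q_j^2)$ is a quadratic form in $q_j$, so $l_i(\phi(q))=\sum_j l_i(q_j^2)$ is a quadratic form on $V^N$. Therefore
$$R(q):=R_S(\phi(q))=\sum_{i=1}^k\bigl(l_i(\phi(q))\bigr)^2$$
is a sum of squares of quadratic forms on $V^N$, exactly the shape required by Lemma \ref{prelim}.

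It remains to verify the containment $Z(Q_1)\subseteq Z(R)$. If $Q_1(q)=0$, then each $q_j$ vanishes on $S$, so $q_j\in I_{1,d}(S)$ and hence $\phi(q)=\sum q_j^2\in I^{[2]}_{2d}(S)$. Because the $l_i$ were chosen as a basis for the extra linear constraints satisfied by sums of squares in $\sq(S)$, and since $\sq(S)$ spans $I^{[2]}_{2d}(S)$, each $l_i$ annihilates $I^{[2]}_{2d}(S)$; in particular $l_i(\phi(q))=0$ for every $i$, forcing $R(q)=0$. Lemma \ref{prelim} now yields $\alpha>0$ with $\alpha Q_1(q)Q_2(q)\geq R(q)$ on $V^N$, which is the desired inequality after substituting $p=\phi(q)$.

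The main subtlety is identifying the correct vector space on which to invoke Lemma \ref{prelim}. It cannot be $P_{n,2d}$ itself, where $M_S$ and $T$ are merely linear functionals and the product $M_S\cdot T$ is an indefinite rank-two quadratic form. One must instead pass to the parameter space $V^N$ of sum-of-squares representations, where $M_S\circ\phi$ and $T\circ\phi$ become honest positive semidefinite and positive definite quadratic forms, and where the quadratic functional $R_S$ pulls back to a sum of squares of quadratic (not linear) forms. Once this reformulation is in place, every hypothesis of Lemma \ref{prelim} is immediate, and the $\alpha$ it produces is the $\alpha$ we seek.
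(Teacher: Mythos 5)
Your proof is correct, and it reaches Lemma \ref{prelim} by a genuinely different reduction than the paper's. The paper stays on $P_{n,2d}$ for one more step: it restricts to the compact slice $T(p)=1$, notes that $\alpha M_S - R_S$ is concave (linear minus a sum of squares of linear functionals), so its minimum over the slice is attained at an extreme point, and then uses the fact that extreme points of $\overline{Sq}_{n,2d}$ are (normalized) single squares $g^2$; this reduces the whole inequality to the space $V=P_{n,d}$ of single forms $g$, on which $M_S(g^2)$, $T(g^2)$, $R_S(g^2)$ have exactly the shapes demanded by Lemma \ref{prelim}. You instead parametrize all of $\sq$ at once by $N$-tuples via $\phi$, using the Gram-matrix bound on the number of squares needed, and apply the lemma on $V^N$. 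Both routes are valid. Yours avoids having to justify that the extreme points of the compact section are squares (a standard fact the paper asserts without proof), at the cost of invoking the bound on the length of sum-of-squares representations and working on a larger space; the paper's route keeps the auxiliary space as small as possible and makes the convexity mechanism explicit. Your verification of the hypotheses of Lemma \ref{prelim} on $V^N$ — positive semidefiniteness of $M_S\circ\phi$, positive definiteness of $T\circ\phi$, the sum-of-squares-of-quadratics structure of $R_S\circ\phi$, and the containment $Z(Q_1)\subseteq Z(R)$ via $\phi(q)\in I^{[2]}_{2d}(S)$ — is complete and accurate.
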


\begin{proof}
Let $\overline{Sq}_{n,2d}$ be the section of the cone $\sq$ with hyperplane of forms of integral 1 on the unit sphere $\sph$:
$$\overline{Sq}_{n,2d}=\left\{p \in \sq \st \int_{\sph}p \hspace{.7mm} d\sigma=1 \right\}$$.

We begin by observing that it suffices to prove \eqref{genineq} for $p \in \overline{Sq}_{n,2d}$, so we restrict our attention to the case $T(p)=1$. We want to find $\alpha >0$ such that for all $p \in \overline{Sq}_{n,2d}$
\begin{equation}
\alpha M_S(p) - R_S(p) \geq 0.
\end{equation}

We note that $R_S$ is a sum of squares of linear functionals and therefore it is a convex functional on $P_{n,2d}$. Then it follows that the functional $M_S-R_S$ is concave. Since $\overline{Sq}_{n,2d}$ is a compact convex set the functional $M_S-R_S$ attains its minimum at an extreme point of $\overline{Sq}_{n,2d}$. We know that an extreme point of $\overline{Sq}_{n,2d}$ must be a square. Therefore it suffices to show \eqref{genineq} for squares.

We observe that $M_S(g^2)=\sum_{s \in S} g^2(S)$ is a positive semidefinite quadratic form on $P_{n,d}$ while $T(g^2)=\int_{\sph}g^2 \hspace{.7mm} d\sigma$ is a positive definite quadratic form on $P_{n,d}$. Also, $R_S(g^2)=\sum_i l_i^2(g^2)$ is a sum of squares of quadratic forms and $R_S$ vanishes whenever $M_S$ vanishes. Therefore we can apply Lemma \ref{prelim} and the existence of $\alpha$ follows.
\end{proof}

We note that the problem of finding the value of $\alpha$ for a fixed set $S$ and degree $d$ is a semidefinite programming problem and can be solved fast numerically using semidefinite program solving packages.

\section{An Explicit Inequality}\label{sectionexplicitineq}

We now derive an explicit inequality along the lines described in the previous section. We take $S=(s_{ij})$ to be the set of 6 points in $\mathbb{R}^4$ with $\frac{1}{\sqrt{2}}$ in coordinates $i$ and $j$ and 0 in the other two coordinates. This is exactly the set from Example \ref{sectionfirstgap} but we now make the vectors have unit length.

Let $a=(1,0,0,0)$ and $b=(\frac{1}{2},-\frac{1}{2},-\frac{1}{2},-\frac{1}{2})$. We recall from Example \ref{sectionfirstgap} and Section \ref{sectionexplicitexample} that the forms $p \in I^{[2]}_{4}(S)$ satisfy the extra linear constraint $p(a)-p(b)=0$.

We now recall from the previous section definitions of functionals $M_S$, $T$ and $R_S$ for $f \in P_{4,2}$:
\begin{align*}
M_S(f)&=\sum_{s_{ij} \in S} f^2(s_{ij}),\\
T(f)&=\int_{S^3}f^2 \hspace{.2mm} d\sigma,\\
R_S(f)&=(f^2(a)-f^2(b))^2.
\end{align*}
The operators are acting on $f^2$, since we know by the proof of Theorem \ref{theoremineqgen} that we only need to establish the inequality for squares.

We will show that
$$15 M_S(f)T(f)-P_S(f) \geq 0$$
for all forms $f \in P_{4,2}$  and 15 is the smallest value of $\alpha$ that makes the inequality hold.

In order to do this we will explicitly calculate the functionals $M_S$, $T$ and $R_S$ using the following inner product on $P_{4,2}$:

$$\ip{f}{g}=\int_{S^3} fg \hspace{.7mm} d\sigma.$$

\noindent For every point $v \in \mathbb{R}^4$ there exists a unique polynomial $P_v \in P_{4,2}$ such that

$$\ip{f}{P_v}=f(v) \hspace{2mm} \text{for all} \hspace{2mm} f \in P_{4,2}.$$

\noindent It is not hard to show that

\begin{equation}\label{innerprod}
P_{v}(x)=12\ip{x}{v}^2-2\|v\|^2\|x\|^2.
\end{equation}

\noindent Also by definition of $P_v$,

\begin{equation}
\ip{P_v}{P_w}=P_v(w)=12\ip{v}{w}^2-2\|v\|^2\|w\|^2.
\end{equation}

\noindent For more information on the integral inner product and the polynomials $P_v$ see \cite{Muller}. For the points $s_{ij}$ we will denote $P_{s_{ij}}$ simply by $P_{ij}$.

Let's first analyze the quadratic form $$M_S(f)=\sum_{s_{ij} \in S} f^2(s_{ij}),$$ for $f \in P_{2,4}$. This is a positive semidefinite quadratic form of rank 6 on $P_{4,2}$. Using our polynomials $P_{ij}$ we can write is as $$M_S(f)=\sum \ip{f}{P_{ij}}^2.$$

We let $V$ be the span of $P_{ij}$ and then $V^{\perp}$ is the kernel of $M_S$. Using equation \eqref{innerprod} it is a matter of routine calculation to show the quadratic form $M_S$ has two eigenspaces $V_1$ and $V_2$. The eigenspace $V_1$ corresponds to eigenvalue 12 and can be chosen to have the following orthogonal basis:
\begin{align*}
&v_1=\sum P_{ij}, \hspace{4mm} v_2=P_{12}+P_{13}+P_{14}-P_{23}-P_{24}-P_{34},\\
&v_3=P_{12}+P_{24}-P_{13}-P_{34}, \hspace{4mm} v_4=P_{12}+P_{13}+2P_{23}-2P_{14}-P_{24}-P_{34}.\\
\end{align*}
The eigenspace $V_2$ corresponds to eigenvalue $6$ and has orthogonal basis:
\begin{equation*}
v_5=P_{12}+P_{34}-P_{23}-P_{14}, \hspace{4mm} v_6=\sum P_{ij}-3(P_{13}+P_{2,4}).
\end{equation*}

If we pick as an orthogonal basis unit vectors in the direction of eigenvectors $v_i$ then the form $M_S$ becomes:
$$M_S(f)=12(x_1^2+x_2^2+x_3^2+x_4^2)+6(x_5^2+x_6^2) \hspace{2mm} \text{where} \hspace{2mm} x_i=\ip{f}{\frac{v_i}{\|v_i\|}}.$$
Now we analyze $R_S(f)=(f^2(a)-f^2(b))^2.$ We can rewrite this as $$P_S(f)=\left(\ip{f}{P_a}^2-\ip{f}{P_b}^2\right)^2.$$
\noindent Then
$$R_S(f)=\left(\ip{f}{P_a}^2-\ip{f}{P_b}^2\right)^2=\ip{f}{P_a+P_b}^2\ip{f}{P_a-P_b}^2.$$
Therefore $R_S(f)$ is a product of two rank 1 quadratic forms.

We know from Section \ref{sixpoints} that for all $f$ with $M_S(f)=0$ we also have $$f(a)=\ip{P_a}{f}=\ip{P_b}{f}=f(p_b).$$ It follows that $P_a-P_b$ is in the span of $P_{ij}$. It is easy to check that $$P_{a}-P_{b}=\frac{v_2}{2} \hspace{2mm} \text{and} \hspace{2mm} \|P_a-P_b\|^2=18.$$
\indent To deal with $P_a+P_b$ we note that its projection onto the span of $P_{ij}$ is equal to $\frac{1}{6}\sum P_{ij}=\frac{v_1}{6}$. It is easy to check using \eqref{innerprod} that $$\|P_a+P_b\|^2=22 \hspace{4mm} \text{while} \hspace{4mm} \|\frac{v_1}{6}\|^2=12.$$
\noindent We can therefore write $\displaystyle P_a+P_b=\frac{v_1}{6}+v_7$ where $v_7$ is a vector is the kernel of $M_S(f)$ and $\|v_7\|^2=22-12=10$. If we extend our basis $\left\{v_i/\|v_i\|\right\}$ of the span of $P_{ij}$ by adding the vector $v_7/\|v_7\|$ and 3 more unit vector to make a basis of $P_{4,2}$ the we can write:

\begin{equation}
R_S(f)=18x_2^2(12x_1^2+10x_7^2) \hspace{2mm} \text{where} \hspace{2mm} x_i=\ip{f}{\frac{v_i}{\|v_i\|}}.
\end{equation}

Since we chose the integral inner product the form $T$ is simply: $$T=\sum_{i=1}^{10}x_i^2,$$
for any choice of orthogonal basis. Therefore $$M_S(f)T(f)=\left(12(x_1^2+x_2^2+x_3^2+x_4^2)+6(x_5^2+x_6^2)\right)\sum_{i=1}^{10}x_i^2.$$

If we want to choose $\alpha$ such that $\alpha M_S(f)T(f)-R_S^2(f)>0$ then we need to choose it so that the coefficient of $x_{2}^2x_7^2$ is nonnegative. If follows that we need $\alpha \geq 18 \cdot 10/12=15,$ and it is easy to see that $\alpha=15$ indeed will suffice.

\end{document}